\newcommand{\R}{{\mathbb R}}
\newcommand{\X}{\mathcal{X}}
\newcommand{\Hc}{\mathcal{H}}
\newcommand{\C}{\mathcal{C}}
\newcommand{\Lc}{\mathcal{L}}
\newcommand{{\E}}{\mathbb E}
\newcommand{\F}{\mathbb F}
\newcommand{\dto}{\mathrel{\raisebox{-0.2ex}{$\xrightarrow{\,\raisebox{-0.2em}{$\scriptstyle \mathcal{D}$}\;}$}}}
\newcommand{\cov}{\mathop{\rm cov}}
\renewcommand{\rho}{\varrho}
\newtheorem{proposition}{Proposition}[section]
\newtheorem{theorem}[proposition]{Theorem}
\newtheorem{corollary}[proposition]{Corollary}
\newtheorem{remark}[proposition]{Remark}
\newtheorem{example}[proposition]{Example}
\newenvironment{tightlistleft}[1]{%
    \list{{\textup{(\roman{enumi})}}}{\settowidth\labelwidth{{\textup{(#1)}}}
    \leftmargin 0pt \advance\leftmargin\labelsep \itemindent \parindent
    \parsep 0pt plus 1pt minus 1pt \topsep 3pt \itemsep 0pt
    \usecounter{enumi}}}{\endlist}
\begin{document}

\title{\bf Statistical Estimation of Composite Risk Functionals and Risk Optimization Problems%\thanks{Grants or other notes
%about the article that should go on the front page should be
%placed here. General acknowledgments should be placed at the end of the article.}
}
%\subtitle{Do you have a subtitle?\\ If so, write it here}

%\titlerunning{Estimation of Composite Risk Functionals}        % if too long for running head

\author{Darinka Dentcheva\thanks{Stevens Institute of Technology, Hoboken, NJ 07030, USA; Email: darinka.dentcheva@stevens.edu}
\and
Spiridon Penev\thanks{The University of New South Wales, Sidney, 2052 NSW, Australia; Email: s.penev@unsw.edu.au}
               \and
Andrzej Ruszczy\'nski\thanks{Rutgers University, Piscataway, NJ 08854, USA; Email: rusz@rutgers.edu}
}

%\date{Received: date / Accepted: date}
% The correct dates will be entered by the editor

\maketitle

\begin{abstract}
We address the statistical estimation of composite functionals which may be nonlinear in the probability measure. Our study is motivated by the need to estimate coherent measures of risk, which become increasingly popular in finance, insurance, and other areas associated with optimization under uncertainty and risk. We establish central limit formulae for composite risk functionals. Furthermore, we discuss the asymptotic behavior of optimization problems whose objectives are composite risk functionals and we establish a central limit formula of their optimal values when an estimator of the risk functional is used. While the mathematical structures accommodate commonly used coherent measures of risk, they have more general character, which may be of independent interest.\vspace{2ex}\\
\emph{Keywords:} Risk Measures \and Composite Functionals \and Central Limit Theorem
%\JEL{C46 \and C49}
%\subclass{ {62G20} \and  {62P05}}
\end{abstract}

\section{Introduction}

Increased interest in the analysis of coherent measures
is motivated by their application as mathematical models of risk quantification in finance and other areas.
This line of research leads to new mathematical  problems in convex analysis,
optimization and statistics. The uncertainty in risk assessment is
expressed mathematically as a functional of random variable, which may be nonlinear with respect to the probability measure.
Most frequently, the risk measures of interest in practice arise when we evaluate gains or losses depending on the choice $z$, which represents the control of a decision maker and random quantities, which may be summarized in a random vector $X$. More precisely, we are interested in the functional $f(z,X)$, which may be optimized under practically relevant restrictions on the decisions $z$. Most frequently, some moments of the random variable $Y= f(z,X)$ are evaluated. However, when models of risk are used, the existing theory of statistical estimation is not always applicable.

Our goal is to address the question of statistical estimation of composite functionals depending on random vectors and their moments. Additionally, we analyse the optimal values of such functionals, when they depend on finite-dimensional decisions within a deterministic compact set. The known coherent measures of risk can be cast in the structures considered here and we shall specialize our results to several classes of popular risk measures.  We emphasize however, that the results address composite functionals of more general structure with a potentially wider applicability.

Axiomatic definition of risk measures was first proposed in \cite{Kijima}.
The currently accepted definition of a coherent risk measure was introduced in \cite{ADEH} for finite probability spaces and was further extended to more general spaces in \cite{RuS,FS2004}. Given a probability space $(\varOmega,{\mathcal F},P)$, we consider the set of random variables, defined on it, which have finite $p$-th moments and denote it by $\Lc^p(\varOmega,{\mathcal F},P)$.  A \emph{coherent measure of risk} is a convex, monotonically increasing, and positively homogeneous functional $\rho: \Lc^p(\varOmega,{\mathcal F},P)\to \bar{R}$, which satisfies the translation equivariant property $\rho(Y+a)=\rho(Y)+ a$ for all $a\in\R$. Here $\bar{R} = \R\cup\{+\infty\}$ and we assume that $Y$ represent losses, i.e., smaller realizations are preferred.
Related concepts are introduced in \cite{RUZ02,FS}.

A measure of risk is called \emph{law-invari\-ant}, if it depends only on the distribution of the random variable, i.e.,
if $\rho (X)= \rho(Y)$ for all random variables $X,Y\in \Lc^p(\varOmega,{\mathcal F},P)$ having the same distribution.

A practically relevant law-invariant coherent measure of risk is
the \emph{mean--semideviation} of order $p\ge 1$ (see \cite{ORejor,ORmp}, \cite[s. 6.2.2]{SDR}),
defined in the following way:
\begin{equation}
\label{semideviation}
\rho(X) = \E[X] + \kappa \big\| (X-\E[X])_+\big\|_p =
\E[X] + \kappa \Big[\E\big[\big(\max\{0,X- \E[X]\}\big)^p\big]\Big]^{\frac{1}{p}},
\end{equation}
where $\kappa \in [0,1]$. Note the nonlinearity with respect to the probability measure in formula \eqref{semideviation}.

Another popular law-invariant coherent measure of risk is the \emph{Average Value at Risk} at level $\alpha\in (0,1]$
(see \cite{RU02,ORsiam}), which is defined as follows:
\begin{equation}
\label{AVAR-ex}
{\rm AVaR}_{\alpha}(X)  =  \frac{1}{\alpha}  \int_{1-\alpha}^1 F_X^{-1}(\beta)\;d\beta
 = \min_{\eta\in\R} \bigg\{\eta + \frac{1}{\alpha} \E[(X- \eta)_+]\bigg\}.
\end{equation}
Here, $F_X(\cdot)$ denotes the distribution function of $X$. The reader may consult, for example, \cite[Chapter 6]{SDR} and the references therein, for more detailed discussion of these risk measures and their representation.

The risk measure ${\rm AVaR}_{\alpha}(\cdot)$ plays a fundamental role as a building block in the description of every law-invariant coherent risk measure  via the \emph{Kusuoka representation}.
 The original  result is presented in \cite{Kusuoka} for
 risk measures defined on $\Lc^{\infty}(\varOmega,\F,P)$, with an atomless probability space.
 It states that for every law-invariant coherent risk measure $\rho(\cdot)$,  a convex set $\mathcal{M}\subset \mathcal{P}(0,1]$  exists such that for all $Z\in  \Lc^{\infty}(\varOmega,\F,P)$, it holds
\begin{equation}
\label{Kusuoka}
\rho(X) = \sup_{m \in \mathcal{M}}  \int_0^1 {\rm
AVaR}_{\alpha}(X)\;m(d\alpha).
\end{equation}
Here $\mathcal{P}(0,1]$ denotes the set of probability measures on the interval $(0,1]$.
This result is extended to the setting of $\Lc^p$ spaces with $p \in [1, \infty )$; see \cite{FrittelliRosazza-2005},
 \cite{PflugR}, \cite{PflugW}, \cite{SDR}, \cite{DePeRu}, and the references therein.

The extremal representation of ${\rm AVaR}_{\alpha}(X)$ on the right hand side of \eqref{AVAR-ex}  was used as a motivation in \cite{Krokhmal} to propose the following higher-moment coherent measures of risk:
\begin{equation}
\label{Krokhmal}
\rho (X)=\min_{\eta\in\R} \bigg\{ \eta+\frac{1}{\alpha}\|(X - \eta)_+\|_p\bigg\}, \quad p > 1.
\end{equation}
These risk measures are special cases of a more general family considered in \cite{ChLi}; they are also examples of
optimized certainty equivalents of \cite{Ben-Tal}.
In the paper \cite{DePeRu}, the explicit Kusuoka representation for the higher-order risk measures (\ref{Krokhmal}) was described by utilising duality theorems from \cite{rock-duality}. These risk measures are used for portfolio optimization in \cite{Krokhmal}, where their advantages in
in comparison to the classical mean-variance optimization model of Markowitz (\cite{M52,M87}) is demonstrated on examples.
The recent work \cite{MaPe} indicates that if such type of risk measure is used as a risk criterion in European option portfolio optimization, the time evolution of the portfolio is superior to the evolution of a portfolio optimized with respect to the AVaR risk or with respect to the mean-variance optimization model of Markowitz. Similar observations were recently made in \cite{Gulten}.

A connection of measures of risk to the utility theories is discussed in the literature. Many of the risk measures of interest can be expressed via optimization of the so-called optimized certainty equivalent \cite{Ben-Tal} for a suitable choice of the utility function. Relations of risk measures to rank-dependent utility functions are given in \cite{FS2004}. In \cite{DDAR}, it is established that coherent measures of risk are a numerical representation of certain preference relation defined on the space of bounded quantile functions.

In practical applications, we deal with samples and stochastic models of the underlying random quantities. Therefore, the questions pertaining to statistical estimation of the measures of risk are crucial to the proper use of law-invariant measures of risk.
Several measures of risk have an explicit formula, which can be used as a \emph{plug-in estimator}, with the original measure $P$ replaced by the empirical measure. The empirical quantile is a natural estimator of the Value at Risk. A natural empirical estimator of ${\rm AVaR}_{\alpha}(X)$ leads to the use of the $L$-\emph{statistic}  (see \cite{JZ2003,DP}).
Furthermore,  the Kusuoka representation, as well as the use of distortion functions in insurance has motivated the construction and analysis of empirical estimates of spectral measures of risk using $L$-{statistic}. We refer to
\cite{JZ2003,BJPZ2008,JZ2007,BZ2010,Tsukahara2013,BK2012} for more details on this approach.
Some risk measures, such as the tail risk measures of form \eqref{Krokhmal}, cannot be estimated via simple explicit formulae but are obtained as a solution of a convex optimization problem with convex constraints.  Although asymptotic behavior of optimal values of sample-based expected value models has been investigated before (see \cite[Ch. 8]{Romisch03}, \cite[Ch. 5]{SDR} and the references therein), the existing results do not address models with risk measures.

Our paper is organized as follows. Section \ref{s:estimation} contains the key result of our paper, which establishes a central limit formula for a composite risk functional. We provide a characterization of the limiting distribution of the empirical estimators for such functionals. Section \ref{s:optimalvalue}, contains a central limit formula for risk functionals, which are obtained as a the optimal value of composite functionals.
Section \ref{last} provides asymptotic analysis and central limit formulae for the optimal value of optimization problems which use measures of risk in their objective functions.
We pay special attention to some popular measures and we discuss several illustrative examples in Sections \ref{s:estimation},\ref{s:optimalvalue}, and \ref{last}. In Section 5, we perform a simple simulation study to assess the accuracy of our approximations. Section 6 concludes.

\section{Estimation of composite risk functionals}
\label{s:estimation}

In the first part of our paper, we focus on functionals of the following form:
\[
\rho(X) = \E\Big[f_1\Big(\E\big[f_2\big(\E[\;\cdots f_k(\E[f_{k+1}(X)],X)]\;\cdots,X\big)\big],X\Big)\Big],
\]
where $X$ is an $m$-dimensional random vector, $f_j:\R^{m_j} \times \R^m \to \R^{m_{j-1}}$, $j=1,\dots,k$, with $m_0=1$ and $f_{k+1}:\R^m\to\R^{m_k}$.
Let $\X\subset \R^m$ be the domain of the random variable $X$.
We denote the probability distribution of $X$ by $P$.

Given a sample $X_1,\dots,X_n$ of independent identically distributed observations, we consider the following plug-in empirical estimate of the value of $\rho$:
\begin{align*}
\rho^{(n)} = \sum_{i_0=1}^n\frac{1}{n}\Big[f_1\Big(\sum_{i_1=1}^n\frac{1}{n}\big[f_2\big(\sum_{i_2=1}^n\frac{1}{n}[&\;\cdots f_k(\sum_{i_k=1}^n\frac{1}{n}f_{k+1}(X_{i_k}),X_{i_{k-1}})]\\
&\;\cdots,X_{i_1}\big)\big],X_{i_0}\Big)\Big]
\end{align*}

Our construction is motivated by the aim to estimate coherent measures of risk from the family of mean--semideviations (\cite{ORejor,ORmp}).

\begin{example}[{\rm \textbf{Semideviations}}]
\label{e:semideviation}
Consider the functional \eqref{semideviation}
representing the mean--semideviation of order $p\ge 1$.
In this case, we have $k=2, m=1$, and
\begin{align*}
f_1(\eta_1,x) &= x + \kappa\eta_1^{\frac{1}{p}},\\
f_2(\eta_2,x) &= \big[\max\{0,x- \eta_2\}\big]^p,\\
f_3(x) &= x. \hspace{7cm}\blacktriangle
\end{align*}
\end{example}

In order to formulate the main theorem of this section, we introduce several relevant quantities.
 We define:
\begin{align*}
\bar{f}_j(\eta_j)  &=   \int_{\X} f_j(\eta_j,x)\,P(dx),\quad j=1,\dots,k, \\
\mu_{k+1} &= \int_{\X} f_{k+1}(x)\,P(dx),\\
\mu_j &= \bar{f}_{j}(\mu_{j+1}),\quad j=1,\dots,k.
\end{align*}
 Suppose $I_j$ be compact subsets of $\R^{m_j}$ such that $\mu_{j+1}\in \text{int}(I_j)$, $j=1,\dots,k$.
We introduce the notation $\Hc = \C_1(I_1)\times \C_{m_1}(I_2)\times \dots \C_{m_{k-1}}(I_k)\times  \R^{m_{k}}$,
where $\C_{m_{j-1}}(I_j)$ is the space of continuous functions on $I_j$ with values in $\R^{m_{j-1}}$ equipped with the usually
supremum norm. The space $\R^{m_k}$ is equipped with the Euclidean norm and $\Hc$ is assumed equipped with the product norm.
We use Hadamard directional derivatives of the functions $f_{j}\big(\cdot,x)$ at points $\mu_{j+1}$ in directions
$\zeta_{j+1}$, i. e.,
\[
f'_{j}\big(\mu_{j+1},x;\zeta_{j+1}) = \lim_{{t\downarrow 0}\atop{s\to \zeta_{j+1}}}\frac{1}{t}
\big[ f_{j}\big(\mu_{j+1}+ts,x) - f_{j}\big(\mu_{j+1},x)\big].
\]
For every direction $d = (d_1,\dots,d_{k},d_{k+1}) \in\Hc$, we define recursively the sequence of vectors:
\begin{equation}
\label{recursive0}
\begin{gathered}
\xi_{k+1}(d) = d_{k+1},\\
\xi_{j}(d) = \int_{\X} f'_{j}\big(\mu_{j+1},x;\xi_{j+1}(d)\big)\,P(dx) + d_{j}\big(\mu_{j+1}\big),\quad j=k,k-1,\dots,1.
\end{gathered}
\end{equation}

\begin{theorem}
\label{t:main}
Suppose the following conditions are satisfied:
\begin{tightlistleft}{iv}
\item[(i)]  $\int \| f_j(\eta_j,x)\|^2 \;P(dx)<\infty$ for all $\eta_j\in I_j$, and $\int \| f_{k+1}(x)\|^2 P(dx)<\infty$;
\item[(ii)]  For all $x\in \X$, the functions $f_j(\cdot,x)$, $j=1,\dots,k$, are Lipschitz continuous:
\[
\|f_j(\eta_j',x)- f_j(\eta_j'',x)\| \le \gamma_j(x) \|\eta_j'-\eta_j''\|,\quad \forall\; \eta_j',\eta_j''\in I_j,
\]
and $\int \gamma_j^2(x)\;P(dx) <\infty$.
\item[(iii)]  For all $x\in \X$, the functions $f_j(\cdot,x)$, $j=1,\dots,k$, are Hadamard directionally differentiable.
\end{tightlistleft}
Then
\[
\sqrt{n}\big[ \rho^{(n)} - \rho\big] \dto \xi_1(W),
\]
where $W(\cdot) = \big( W_1(\cdot), \dots , W_{k}(\cdot), W_{k+1}\big) $ is a zero-mean Brownian process on
$I = I_1\times I_2 \times \dots \times I_k$. Here  $W_j(\cdot)$  is a  Brownian process
of dimension $m_{j-1}$ on $I_j$, $j=1,\dots,k$,  and  $W_{k+1}$ is an $m_k$-dimensional normal vector. The
covariance function of $W$ has the following form:
\begin{equation}
\label{covariance0}
\begin{aligned}
&\cov \big[W_i(\eta_i), W_j(\eta_j) \big] =
\int_{\X} \big[ f_i(\eta_i,x) - \bar{f}_i(\eta_i)\big] \big[ f_j(\eta_j,x) - \bar{f}_j(\eta_j)\big]^\top \;P(dx),\\
& \hspace{20em} \eta_i\in I_i,\ \eta_j\in I_j,\ i,j=1,\dots,k,\\
&\cov \big[ W_i(\eta_i) , W_{k+1} \big] =
\int_{\X} \big[ f_i(\eta_i,x) - \bar{f}_i(\eta_i)\big] \big[ f_{k+1}(x) - \mu_{k+1}\big]^\top \;P(dx),\\
& \hspace{20em} \eta_i\in I_i,\ i=1,\dots,k,\\
&\cov \big[ W_{k+1}, W_{k+1} \big] =
\int_{\X} \big[ f_{k+1}(x) - \mu_{k+1}\big] \big[ f_{k+1}(x) - \mu_{k+1}\big]^\top \;P(dx).
\end{aligned}
\end{equation}
\end{theorem}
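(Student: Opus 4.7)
The plan is to recast $\rho^{(n)}$ and $\rho$ as images under one fixed nonlinear map $T$ acting on the space $\Hc$, and then invoke the functional delta method. Namely, for $h = (h_1,\dots,h_k,v) \in \Hc$, define recursively $\nu_{k+1}(h) = v$ and $\nu_j(h) = h_j(\nu_{j+1}(h))$ for $j=k,\dots,1$, and set $T(h) = \nu_1(h)$. Taking $h^0 = (\bar{f}_1,\dots,\bar{f}_k,\mu_{k+1})$ gives $T(h^0) = \rho$, while taking
\[
h^{(n)} = \bigl(\bar{f}_1^{(n)},\dots,\bar{f}_k^{(n)},\mu_{k+1}^{(n)}\bigr), \qquad \bar{f}_j^{(n)}(\eta_j) = \frac{1}{n}\sum_{i=1}^{n} f_j(\eta_j,X_i),\quad \mu_{k+1}^{(n)} = \frac{1}{n}\sum_{i=1}^{n} f_{k+1}(X_i),
\]
produces $T(h^{(n)}) = \rho^{(n)}$, because the nested empirical means in the definition of $\rho^{(n)}$ are exactly the values of $\bar{f}_j^{(n)}$ evaluated at the previous empirical mean. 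From here the theorem reduces to (a)~a functional CLT for $h^{(n)}$ in $\Hc$ and (b)~Hadamard directional differentiability of $T$ at $h^0$.

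For step~(a), I would argue that for each $j$ the class $\mathcal{F}_j = \{f_j(\eta_j,\cdot) : \eta_j \in I_j\}$ is $P$-Donsker. The Lipschitz hypothesis~(ii) together with compactness of $I_j$ controls bracketing numbers by $N_{[\,]}(\varepsilon,\mathcal{F}_j,L^2(P)) \le C\varepsilon^{-m_j}$, since an $\varepsilon$-net of $I_j$ generates $L^2(P)$-brackets of size $2\varepsilon\|\gamma_j\|_{L^2(P)}$; the square-integrability of $\gamma_j$ and condition~(i) give finite envelopes, so the bracketing integral is finite and Donsker's theorem applies. Joint weak convergence of $\sqrt{n}(h^{(n)} - h^0)$ in $\Hc$ to the centered Gaussian process $W$ with the stated covariance function follows by considering the enlarged class $\mathcal{F}_1 \cup \cdots \cup \mathcal{F}_k \cup \{f_{k+1}\}$ and noting that covariances of empirical averages reproduce exactly the kernel in~\eqref{covariance0}.

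For step~(b), I would show by backward induction on $j$ that for any sequence $h_t = h^0 + t\,d_t$ in $\Hc$ with $d_t \to d$ and $t\downarrow 0$, one has $\nu_j(h_t) = \mu_j + t\,\xi_j(d) + o(t)$, with $\xi_j(d)$ defined by~\eqref{recursive0}. The inductive step expands
\[
\nu_j(h_t) = \bar{f}_j\bigl(\nu_{j+1}(h_t)\bigr) + t\,(d_t)_j\bigl(\nu_{j+1}(h_t)\bigr) + o(t),
\]
and uses (iii) and the interchange of integration with Hadamard directional differentiation, i.e., $\bar{f}_j'(\mu_{j+1};\zeta) = \int f_j'(\mu_{j+1},x;\zeta)\,P(dx)$, which is justified by a dominated convergence argument based on the Lipschitz bound~(ii): the difference quotients $(f_j(\mu_{j+1}+ts_t,x) - f_j(\mu_{j+1},x))/t$ are uniformly dominated by $\gamma_j(x)\|s_t\|$, and $\|s_t\|$ is bounded along the Hadamard sequence. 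Continuity of $(d_t)_j$ on $I_j$ lets us replace $(d_t)_j(\nu_{j+1}(h_t))$ by $d_j(\mu_{j+1})$ up to $o(1)$ error. This establishes $T$ as Hadamard directionally differentiable at $h^0$ with derivative $T'_{h^0}(d) = \xi_1(d)$.

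Combining the two steps via the delta method (in its Hadamard directional form, which only requires directional differentiability and tangential convergence) yields $\sqrt{n}\bigl[\rho^{(n)} - \rho\bigr] \dto \xi_1(W)$, which is the claim. The principal technical obstacle I foresee is a clean handling of step~(a): one must verify not only Donsker-ness of each $\mathcal{F}_j$ but also joint tightness in the product space $\Hc$ and, more subtly, measurability of the supremum-norm distances so that the empirical process machinery applies. A secondary obstacle is making the induction in step~(b) tight when the perturbation directions $d_t$ are themselves varying functions in $\C_{m_{j-1}}(I_j)$; here the uniform continuity of $d$ on the compact set $I_j$ and continuity of the inner maps $\nu_{j+1}$ on $\Hc$ are the key ingredients.
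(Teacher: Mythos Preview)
Your proposal is correct and follows essentially the same strategy as the paper: both recast $\rho$ and $\rho^{(n)}$ as values of a single composition operator $\Psi$ (your $T$) on the empirical process, invoke the Donsker property of the Lipschitz class via the square-integrable envelope, compute the Hadamard directional derivative of $\Psi$ at $\bar{f}$ by the same backward recursion, and conclude by the infinite-dimensional delta method. The only point the paper makes explicit that you leave implicit is the verification that $\bar{f}$ lies in the interior of the domain $H$ on which the nested composition is well defined (so that $\nu_{j+1}(h_t)\in I_j$ for small $t$); this is handled in the paper by the same recursive Lipschitz estimate you already have in hand.
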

\begin{proof}
We define $I = I_1\times I_2 \times \dots \times I_k$, $M= m_0+m_1+\dots+m_k$, and the vector-valued function
$
f: I \times \X \to \R^{M}
$
with block coordinates
$f_j(\eta_j,x)$, $j=1,\dots,k$, and $f_{k+1}(x)$. Similarly, we define $\bar{f}: I  \to \R^{M}$ with block
coordinates $\bar{f}_j(\eta_j)$, $j=1,\dots,k$, and $\mu_{k+1}$.
Consider the
empirical estimates of the function $\bar{f}(\eta)$:
\[
h^{(n)}(\eta)  =    \frac{1}{n}\sum_{i=1}^n f(\eta,X_i),\quad n =1,2,\dots.
\]
Due to assumptions (i)--(ii), all  functions $h^{(n)}$ are elements of the space
$\Hc$.

Furthermore, assumptions (i)--(ii) guarantee that the class of functions $f(\eta,\cdot)$, $\eta\in I$, is \emph{Donsker},
that is, the following uniform  Central Limit Theorem holds (see \cite[Ex. 19.7]{vdV}):
\begin{equation}
\label{Donsker}
\sqrt{n}\big( h^{(n)} - \bar{f} \big) \dto W,
\end{equation}
where $W$ is a zero-mean Brownian process on $I$ with covariance function
\begin{equation}
\label{covariance}
\cov \big[ W(\eta'), W(\eta'') \big]
=  \int_{\X} \big[ f(\eta',x) - \bar{f}(\eta')\big] \big[ f(\eta'',x) - \bar{f}(\eta'')\big]^\top \;P(dx).
\end{equation}
This fact will allow us to establish asymptotic properties of the sequence $\big\{\rho^{(n)}\big\}$.

First, we define a subset $H$ of $\Hc$  containing all elements $(h_1,\dots,h_k, h_{k+1})$ for which
$h_{j+1}(h_{j+2}(\cdots h_k(h_{k+1}) \cdots )) \in I_j$, $j=1,\dots,k$.
We define an operator $\Psi: H \to \R$ as follows
\[
\varPsi(h) = h_1\Big(h_2\big(\;\cdots h_k(h_{k+1})\;\cdots\big)\Big).
\]
By construction the value of $\rho(X)$ is equal to the value of  $\varPsi\big(\bar{f}\big)$ and the value of $\rho^{(n)}$ is equal to the value of $\varPsi\big(h^{(n)}\big)$.

To derive the limit properties of the sequence $\big\{ \rho^{(n)} \big\}$ we shall use \emph{Delta Theorem} (see, \cite{Romisch}).
The essence of applying the theorem is in identifying conditions under which a statement about a limit result related to convergence in distribution of a
scaled version of a statistic $h^{(n)},$  can be translated into a statement about a convergence in distribution of a scaled version of a transformed statistic $\Psi(h^{(n)}).$

To this end, we have to verify Hadamard directional differentiability of $\varPsi(\cdot)$ at $\bar{f}$.

Observe that the point $\bar{f}$ is an element of $H$, because $\mu_{j+1} \in \text{int}(I_j)$, $j=1,\dots,k$. Moreover,
due to assumption (ii), the following inequality is true for every $j=1,\dots,k$:
\begin{align*}
\lefteqn{\| h_{j}(h_{j+1}(h_{j+2}(\cdots h_k(h_{k+1}) \cdots ))) - \mu_j \|} \\
& \le
\| h_j - \bar{f_j}\| + \| \bar{f}_{j}(h_{j+1}(h_{j+2}(\cdots h_k(h_{k+1}) \cdots ))) - \bar{f}_j(\mu_{j+1}) \| \\
& \le
\| h_j - \bar{f_j}\| + \int \gamma_j(x)\;P(dx) \cdot  \| h_{j+1}(h_{j+2}(\cdots h_k(h_{k+1}) \cdots )) - \mu_{j+1} \|.
\end{align*}
Recursive application of this inequality demonstrates that $\bar{f}$
is an interior point of~$H$. Therefore, the quotients appearing in the definition of the Hadamard directional derivative
are well defined.

Conditions (ii) and (iii) imply that the functions $\bar{f}(\cdot)$ and $h^{(n)}(\cdot)$ are also Hadamard directionally differentiable.
Consider the operator $\varPsi_k(h) = h_k(h_{k+1})$ at $h \in \text{int}(H)$. Let $d^\ell=(d_1^\ell,\dots,d_k^\ell,d_{k+1}^\ell)\in \Hc$ be a sequence of directions converging in norm to an arbitrary direction $d \in \Hc$, when $\ell\to\infty$. For a sequence
$t_\ell\downarrow 0$ and $\ell$ sufficiently large, we have
\begin{align*}
\varPsi_k'(h;d)
&= \lim_{\ell\to\infty}\frac{1}{t_\ell}\big[\varPsi_k(h_k+t_\ell d_k^\ell,h_{k+1}+t_\ell d_{k+1}^\ell) -  \varPsi_k(h_k,h_{k+1})\big] \\
&= \lim_{\ell\to\infty} \frac{1}{t_\ell}\big([h_k+t_\ell d_k^\ell](h_{k+1}+t_\ell d_{k+1}^\ell) - h_k(h_{k+1})\big)\\
&= \lim_{\ell\to\infty} \frac{1}{t_\ell}\big(h_k(h_{k+1}+t_\ell d_{k+1}^\ell) - h_k(h_{k+1}) \big) + d_{k}^\ell(h_{k+1}+t_\ell d_{k+1}^\ell) \\
&= h_k'(h_{k+1}; d_{k+1}) + d_{k}(h_{k+1}).
\end{align*}
Consider now the operator
$\varPsi_{k-1}(h) = h_{k-1}\big(h_k(h_{k+1})\big) = h_{k-1}\big(\varPsi_k(h)\big)$.
By the chain rule for Hadamard directional derivatives we obtain
\begin{align*}
\varPsi_{k-1}'(h;d)
&= h'_{k-1}\big(\varPsi_k(h);\varPsi_k'(h;d)\big) + d_{k-1}\big(\varPsi_k(h)\big).
% \\
% &= h'_{k-1}\big(h_k(h_{k+1});h_k'(h_{k+1}; d_{k+1}) + d_{k}(h_{k+1})\big) + d_{k_1}\big( h_k(h_{k+1})\big).
\end{align*}
In this way, we can recursively calculate the Hadamard directional derivatives of the operators
$\varPsi_j(h) = h_j\big(h_{j+1}(\,\cdots h_{k}(h_{k+1})\,\cdots)\big)$:
\begin{equation}
\label{recursive}
\varPsi_{j}'(h;d) = h'_{j}\big(\varPsi_{j+1}(h);\varPsi_{j+1}'(h;d)\big) + d_{j}\big(\varPsi_{j+1}(h)\big),\quad j=k,k-1,\dots,1.
\end{equation}
Now the Delta Theorem \cite{Romisch}, relation \eqref{Donsker}, and the Hadamard directional differentiability of $\varPsi(\cdot)$ at $\bar{f}$
imply that
\begin{equation}
\label{delta}
\sqrt{n}\big[ \rho^{(n)} - \rho(X) \big] =
\sqrt{n}\big[ \varPsi\big(h^{(n)}\big) - \varPsi\big(\bar{f}\big) \big]  \dto \varPsi'\big( \bar{f}, W \big).
\end{equation}
The application of the recursive procedure \eqref{recursive} at $h=\bar{f}$ and $d=W$ leads to formulae \eqref{recursive0}.
The covariance structure \eqref{covariance0} of $W$ follows directly from \eqref{covariance}.
\end{proof}

We return to Example~\ref{e:semideviation} and apply Theorem~\ref{t:main}.
\begin{example}[{\rm \textbf{Semideviations continued}}]
\label{e:2}
{\rm
We have defined the mappings
\begin{align*}
\bar{f}_1(\eta_1) &= \E[X] + \kappa\eta_1^{\frac{1}{p}}  = \int f_1(\eta_1,x) P(dx),\\
\bar{f}_2(\eta_2) &= \E \big\{ \big[\max\{0,X- \eta_2\}\big]^p \big\},
\intertext{and the constants}
\mu_3 &= \E[X],\quad \mu_2 = \E \big\{ \big[\max\{0,X- \E[X]\}\big]^p \big\},\quad \mu_1=\rho(X).
\end{align*}
We assume that $p>1$ and $I_2\subset\R$ is a compact interval containing the support of the random variable $X$.
The interval $I_1=[0,a]\subset\R$ can be defined by choosing $a$ so that $a\geq |X - \E(X)|^p$; for example $a$ may be equal to the diameter of the support of $X$ raised to power $p$.
The space $\Hc$ is $\C_1(I_1)\times \C_{2}(I_2)\times \R$  and we take a direction $d\in \Hc$.
Following \eqref{recursive0}, we calculate
\begin{align*}
\xi_2(d)
&= \bar{f}_2'(\mu_{3}; d_{3}) + d_{2}(\mu_{3}) = - p \E \big\{ \big[\max\{0,X- \mu_3\}\big]^{p-1} \big\}d_3 + d_2(\mu_3),\\
\xi_1(d) &= \bar{f}'_{1}\big(\mu_2;\xi_{2}(d)\big) + d_{1}\big(\mu_2\big) =  \frac{\kappa}{p}\mu_2^{\frac{1}{p} - 1}\xi_{2}(d) + d_{1}\big(\mu_2\big).
\end{align*}
We obtain the expression
\begin{multline}
\label{longformula}
\xi_1(W) =  W_1\big( \E \big\{ \big[\max\{0,X- \E[X]\}\big]^p \big\} \big) + {\ }  \\
 \frac{\kappa}{p}\Big( \E \big\{ \big[\max\{0,X- \E[X]\}\big]^p \big\} \Big)^{\frac{1-p}{p}} \times \\
\Big(W_2\big(\E[X]\big) - p  \E \big\{ \big[\max\{0,X- \E[X]\}\big]^{p-1} \big\}W_3\Big).
\end{multline}
The covariance structure of the process $W$ can be determined from \eqref{covariance0}. The process $W_1(\cdot)$ has the constant covariance function:
\begin{multline*}
\text{cov}\big[W_1(\eta'),W_1(\eta'')\big] \\
 =
\int_{\X} \big[ f_1(\eta',x) - \bar{f}_1(\eta')\big] \big[ f_1(\eta'',x) - \bar{f}_1(\eta'')\big] \;P(dx) =
\text{Var}[X].
\end{multline*}
It follows that $W_1(\cdot)$ has constant paths.
The third coordinate, $W_3$ has variance equal to $\text{Var}[X]$. It also follows from \eqref{covariance0}
that $\text{cov}\big[W_1(\eta),W_3\big] = \text{Var}[X]$.
Therefore, $W_1$ and $W_3$ are, in fact, one normal random variable, which we denote by $V_1$.

Observe that \eqref{longformula} involves only the value of the process $W_2$ at  $\mu_3=\E[X]$.
The variance of the random variable $V_2=W_2(\E[X])$ and its covariance with $V_1$ can be calculated from \eqref{covariance0} in a similar way:
\begin{align*}
&\text{Var}[V_2]  = \E\Big\{\Big( \big[\max\{0,X- \E[X]\}\big]^p - \E\big(\big[\max\{0,X- \E[X]\}\big]^p\big) \Big)^2 \Big\},\\
&\text{cov}[V_2,V_1] = {\ }\\
&\qquad \E\Big\{\Big( \big[\max\{0,X- \E[X]\}\big]^p - \E\big(\big[\max\{0,X- \E[X]\}\big]^p\big) \Big)
\Big( X-\E[X] \Big)\Big\}.
\end{align*}
Formula \eqref{longformula} becomes
\begin{multline}
\label{longformula1}
\xi_1(W) =  V_1 +
 \frac{\kappa}{p}\Big( \E \big\{ \big[\max\{0,X- \E[X]\}\big]^p \big\} \Big)^{\frac{1-p}{p}} \times \\
\Big(V_2 - p  \E \big\{ \big[\max\{0,X- \E[X]\}\big]^{p-1} \big\}V_1\Big).
\end{multline}
We conclude that
\[
\sqrt{n}\big[ \rho^{(n)} - \rho\big] \dto \mathcal{N}(0,\sigma^2),
\]
where the variance $\sigma^2$ can be calculated in a routine way as a variance of the right hand side of \eqref{longformula1},
by substituting the expressions for variances and covariances of $W_1$, $W_2$, and $W_3$. \hfill $\blacktriangle$
}
\end{example}

\begin{remark}
Following Example \ref{e:2}, we could derive the limiting distribution of $\sqrt{n}\big[ \rho^{(n)} - \rho\big]$
for $p=1$ as well. However, the risk measure for $p=1$ enjoys a simpler form and is already analysed in the literature (see, \cite[Section 6.5]{SDR}.)

\end{remark}

%\label{last}

\section{Estimation of Risk Measures Representable as Optimal Value of Composite Functional}
\label{s:optimalvalue}

As an extension of the methods of section \ref{s:estimation},
we consider the following general setting. Functions $f_1:\R^d\times\R^s\to\R$, $f_2:\R^d\times\R^m\to\R^s$, and a random vector $X$ in $\R^m$ are given. Our intention is to estimate the value of a composite risk functional
\begin{equation}
\label{optimized}
\varrho = \min_{z\in Z}f_1\big(z,\E[f_2(z,X)]\big).
\end{equation}
where $Z\subset\R^d$ is a nonempty compact set.

We note that the compactness restriction is made for technical convenience and can be relaxed.

Let $X_1,\dots,X_n$ be a random iid sample from the probability distribution $P$ of $X$.
We construct the empirical estimate
\[
\rho^{(n)} =   \min_{z\in Z}f_1\Big(z, \textstyle{\frac{1}{n}\sum_{i=1}^n f_2(z,X_i)}\Big).
\]
Our intention is to analyze the asymptotic behavior of $\rho_n$, as $n\to\infty$.

Following the method of section \ref{s:estimation},
we define the mapping $\varPhi:Z\times \mathcal{C}(Z)\to \R$ as follows:
\[
\varPhi(z,h) = f_1\big(z,h(z)\big).
\]
The space $\R^d\times \mathcal{C}(Z)$ is equipped with the product norm of the euclidian norm on $\R^d$ and the supremum norm on
$\mathcal{C}(Z)$. We also define the functional $v:\mathcal{C}(Z)\to \R$,
\begin{equation}
\label{vh}
v(h) = \min_{z\in Z} \varPhi(z,h).
\end{equation}
Setting
\begin{align*}
\bar{h}(z)  &=  \E[f_2(z,X)],\\
h^{(n)}(z)  &=   \textstyle{\frac{1}{n}\sum_{i=1}^n f_2(z,X_i)},
\end{align*}
we see that
\begin{align*}
\varrho &= v(\bar{h}),\\
\varrho^{(n)} &= v(h^{(n)}),\quad n=1,2\dots.
\end{align*}

Let $\hat{Z}$ denote for the set of optimal solutions of problem \eqref{optimized}.
\begin{theorem}
\label{t:opt-est}
In addition to the general assumptions, suppose the following conditions are satisfied:
\begin{tightlistleft}{iii}
\item[(i)]  The function $f_2(z,\cdot)$ is  measurable for all $z\in Z$;
\item[(ii)] The function $f_1(z,\cdot)$ is  differentiable for all $z\in Z$,
and both $f_1(\cdot,\cdot)$ and its derivative with respect to the second argument, $\nabla f_1(\cdot,\cdot)$,
are continuous with respect to both arguments;
    \item[(iii)] An integrable function $\gamma(\cdot)$ exists such that
    \[
    \| f_2(z',x) - f_2(z'',x)\| \le \gamma(x)\| z' - z''\|
    \]
for all $z',z''\in Z$ and all $x\in \X$; moreover,
    $\int \gamma^2(x)\;P(dx) <\infty$.
\end{tightlistleft}
Then
\begin{equation}
\label{CLT-set}
\sqrt{n}\big[ \rho^{(n)} - \rho\big] \dto \min_{z\in \hat{Z}}
\big\langle \nabla f_1\big(z,\E[f_2(z,X)]\big),W(z)\big\rangle,
\end{equation}
where $W(z)$ is a zero-mean Brownian process on $Z$ with the covariance function
\begin{multline}
\label{covariance-opt}
\cov\big[ W(z'),W(z'')\big] =\\
\int_{\X} \big( f_2(z',x) - \E[f_2(z',X)]\big) \big( f_2(z'',x) - \E[f_2(z'',X)]\big)^\top \;P(dx).
\end{multline}
\end{theorem}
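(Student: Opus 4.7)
\textbf{Proof plan for Theorem \ref{t:opt-est}.}

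The plan is to mimic the strategy of Theorem \ref{t:main}: first establish a functional central limit theorem for $h^{(n)}(\cdot)$ viewed as a random element of $\mathcal{C}(Z)$, then prove Hadamard directional differentiability of the min-value functional $v$ at $\bar{h}$, and finally invoke the Delta Theorem of Römisch. For the first step, conditions (i) and (iii) guarantee that the parametric class $\{f_2(z,\cdot):z\in Z\}$ is Donsker: the Lipschitz estimate with $L^2$-integrable envelope $\gamma$, together with compactness of $Z$, places the class within the scope of standard bracketing or entropy arguments (see \cite[Ex.~19.7]{vdV}). Hence $\sqrt{n}\bigl(h^{(n)}-\bar{h}\bigr)\dto W$ in $\mathcal{C}(Z)$, where $W$ is the zero-mean Gaussian process with covariance \eqref{covariance-opt}.

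For the second step, I would first verify that the composite mapping $\varPhi(z,h)=f_1(z,h(z))$ is Hadamard directionally differentiable in $h$ at $\bar{h}$ with
\[
\varPhi'_h(z,\bar{h};d)=\bigl\langle \nabla f_1(z,\bar{h}(z)),\,d(z)\bigr\rangle,
\]
which follows from (ii) by a routine chain-rule argument (evaluation at $z$ is linear continuous on $\mathcal{C}(Z)$, and differentiability of $f_1(z,\cdot)$ with continuous derivative supplies the Fréchet derivative in the second argument). The crucial ingredient is then a min-value differentiability theorem (a Danskin-type result in the form of Shapiro or Bonnans--Shapiro) which asserts that, whenever $Z$ is compact, $\varPhi(\cdot,\bar{h})$ is continuous in $z$, and $\varPhi'_h(z,\bar{h};\cdot)$ depends continuously on $z$ uniformly on norm-bounded sets of directions, the value function $v(h)=\min_{z\in Z}\varPhi(z,h)$ is Hadamard directionally differentiable at $\bar{h}$ with
\[
v'(\bar{h};d)=\min_{z\in\hat{Z}}\varPhi'_h(z,\bar{h};d).
\]
The hypotheses needed to invoke this result are all available: $Z$ is compact, $f_1$ and $\nabla f_1$ are jointly continuous by (ii), and the Lipschitz estimate (iii) transfers to joint continuity of $\bar{h}$ and of any admissible perturbation $h^{(n)}$, keeping its image in a compact subset on which $\nabla f_1$ is uniformly continuous.

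The final step is a direct application of the Delta Theorem \cite{Romisch} to the composition $\rho^{(n)}=v(h^{(n)})$ and $\rho=v(\bar{h})$, which yields
\[
\sqrt{n}\bigl[\rho^{(n)}-\rho\bigr]\dto v'(\bar{h};W)=\min_{z\in\hat{Z}}\bigl\langle \nabla f_1(z,\E[f_2(z,X)]),\,W(z)\bigr\rangle,
\]
exactly as claimed. The expected main obstacle is step two: verifying the min-value differentiability formula in the Hadamard (rather than Gâteaux) sense. This requires more than pointwise differentiability of $\varPhi$; one must show that the quotients $t_\ell^{-1}[\varPhi(z,\bar{h}+t_\ell d_\ell)-\varPhi(z,\bar{h})]$ converge to $\varPhi'_h(z,\bar{h};d)$ uniformly in $z\in Z$ along any sequence $d_\ell\to d$ in $\mathcal{C}(Z)$, which in turn depends on the joint continuity of $\nabla f_1$ combined with equicontinuity on compact subsets of $\mathcal{C}(Z)$. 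The remaining verifications (well-definedness of $h^{(n)}$ as an element of $\mathcal{C}(Z)$, compactness of $\hat{Z}$, measurability of the min-value random variables) are routine once this is in hand.
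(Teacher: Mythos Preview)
Your proposal is correct and follows essentially the same approach as the paper: establish the Donsker property for $\{f_2(z,\cdot):z\in Z\}$ via \cite[Ex.~19.7]{vdV}, invoke the Bonnans--Shapiro min-value differentiability theorem (the paper cites \cite[Thm.~4.13]{BonnansShapiro} for exactly the Hadamard directional derivative formula $v'(\bar{h};d)=\min_{z\in\hat{Z}}\varPhi_h'(z,\bar{h})d$ that you identify as the main obstacle), and conclude with the Delta Theorem of \cite{Romisch}. The paper's proof is terser, but the ingredients and their order are identical to yours.
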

\begin{proof}
Observe that assumptions (i)-(ii) of Theorem \ref{t:main}
are satisfied due to the compactness of the set $Z$ and assumptions (ii)--(iii) of this theorem.
Therefore, formula \eqref{Donsker} holds:
\begin{equation}
\label{Donsker2}
\sqrt{n}\big( h^{(n)} - \bar{h} \big) \dto W. \notag
\end{equation}
The limiting process $W$ is a zero-mean Brownian process on $Z$ with covariance function \eqref{covariance-opt}.

Furthermore, due to assumption (ii), the function $\varPhi(\cdot,h)$ is continuous. As the set $Z$ is compact, problem \eqref{vh} has a nonempty solution set $S(h)$. By virtue of \cite[Theorem 4.13]{BonnansShapiro}, the optimal value function $v(\cdot)$ is Hadamard-directionally differentiable at $\bar{h}$ in every direction $d$  with
\[
v'(\bar{h};d) = \min_{z\in S(\bar{h})} \varPhi_h'(z,\bar{h})d,
\]
where $\varPhi'(z,h)$ is the Fr\'echet derivative of $\varPhi(z,\cdot)$ at $h$.
Therefore, we can apply the delta method (\cite{Romisch}) to infer that
\[
\sqrt{n}\big(v(h^{(n)}) -v(\bar{h})\big)\dto \min_{z\in S(\bar{h})} \varPhi_h'(z,\bar{h})W.
\]
Substituting the functional form of $\varPhi$, we obtain
 \[
 \varPhi_h'(z,\bar{h}) = \nabla f_1\big(z,\E[f_2(z,X)]\big)\delta_z,
 \]
 where $\delta_z$ is the Dirac measure at $z$. Application of this operator to the process $W$ yields formula \eqref{CLT-set}.
 Observe that $W(\cdot)$ has continuous paths and the minimum exists.
 %Formula \eqref{covariance-opt} follows from the fact that only the value of the process $W$ at $z$ enters formula \eqref{CLT-set}.
\end{proof}
\begin{corollary}
\label{c:cor1}
If, in addition to conditions of Theorem \ref{t:opt-est}, the set $\hat{Z}$ contains only one element $\hat{z}$, then the following
central limit formula holds:
\begin{equation}
\label{CLT-set-2}
\sqrt{n}\big[ \rho^{(n)} - \rho\big] \dto
\big\langle \nabla f_1\big(\hat{z},\E[f_2(\hat{z},X)]\big),W(\hat{z})\big\rangle,
\end{equation}
where $W(\hat{z})$ is a zero-mean normal vector with the covariance
\[
\cov\big[ W(\hat{z}),W(\hat{z})\big] = \cov\big[ f_2(\hat{z},X), f_2(\hat{z},X)\big].
\]
\end{corollary}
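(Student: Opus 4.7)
The plan is to deduce the corollary directly from Theorem \ref{t:opt-est}; no new analytic work is required, only specialization of the limit formula under the singleton assumption on the optimal solution set. First I would observe that the hypotheses of the corollary include all hypotheses of the theorem, so the conclusion \eqref{CLT-set} applies without modification and gives
\[
\sqrt{n}\big[\rho^{(n)} - \rho\big] \dto \min_{z\in\hat{Z}} \big\langle \nabla f_1\big(z,\E[f_2(z,X)]\big),\, W(z) \big\rangle.
\]

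Next I would use the assumption $\hat{Z}=\{\hat{z}\}$ to collapse the minimum on the right-hand side: a minimum over a one-point set is simply the value attained at that point, which reproduces the expression in \eqref{CLT-set-2}. To identify the distribution of $W(\hat{z})$, I would invoke the fact that $W$ is a zero-mean Brownian process on $Z$, so its pointwise evaluation at the fixed deterministic vector $\hat{z}$ is a zero-mean Gaussian random vector of dimension $s$. Substituting $z'=z''=\hat{z}$ into the covariance formula \eqref{covariance-opt} from Theorem \ref{t:opt-est} yields
\[
\cov\big[W(\hat{z}),W(\hat{z})\big] = \int_{\X}\big(f_2(\hat{z},x) - \E[f_2(\hat{z},X)]\big)\big(f_2(\hat{z},x) - \E[f_2(\hat{z},X)]\big)^{\top} P(dx),
\]
which is precisely $\cov\big[f_2(\hat{z},X), f_2(\hat{z},X)\big]$, as claimed.

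There is no genuine obstacle to overcome: the corollary is essentially a bookkeeping consequence of Theorem \ref{t:opt-est}. The only point worth flagging, should one want to spell it out, is that the linear functional $w \mapsto \langle \nabla f_1(\hat{z},\E[f_2(\hat{z},X)]), w\rangle$ is continuous on $\mathcal{C}(Z)$, so the continuous mapping theorem transfers the weak convergence in \eqref{CLT-set} to the weak convergence in \eqref{CLT-set-2}, and the limit is a scalar Gaussian whose variance is the standard sandwich form $\nabla f_1^{\top}\cov[f_2(\hat{z},X), f_2(\hat{z},X)] \nabla f_1$.
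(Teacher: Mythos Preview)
Your proposal is correct and matches the paper's treatment: the paper states the corollary immediately after Theorem~\ref{t:opt-est} without a separate proof, regarding it as the trivial specialization you describe. The only remark is that your final paragraph about the continuous mapping theorem is superfluous---when $\hat{Z}=\{\hat{z}\}$, the right-hand side of \eqref{CLT-set} \emph{is} the right-hand side of \eqref{CLT-set-2}, so no further transfer of weak convergence is needed.
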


The following examples show that two notable categories of risk measures fall into the structure \eqref{optimized}
\begin{example}[{\rm \textbf{Average Value at Risk}}]
\label{e:AVAR}
{\rm
Average Value at Risk \eqref{AVAR-ex} is one of the most popular and most basic coherent measure of risk. Recall that
for a random variable $X$, it is representable as follows:
\[
{\rm AVaR}_{\alpha}(X)  = \min_{z\in\R} \bigg\{z + \frac{1}{\alpha} \E[(X- z)_+]\bigg\}.
\]
This measure fits in the structure \eqref{optimized} by setting
\begin{align*}
f_1(z,\eta) &= z+ \frac{1}{\alpha}\eta\\
f_2(z,X) &= \max (0, X-z).
\end{align*}
The plug-in empirical estimators of \eqref{AVAR-ex} have the following form
\[
\rho^{(n)} = \min_{z\in \R} \Big\{ z + \frac{1}{\alpha n} \sum_{i=1}^n \big(\max(0, X_i- z)\big) \Big\}.
\]
If the support of the distribution of $X$ is bounded, then so is the support of all empirical
distributions and we can assume that the $Z$ contains the support of the distribution.
Observe that all assumptions of Theorem~\ref{t:opt-est} are satisfied. If the distribution function of the random variable $X$ is continuous at $\alpha$, then the solution of the optimization problem at the right-hand side of \eqref{AVAR-ex} is unique. In that case, also the assumptions of Corollary \ref{c:cor1} are satisfied.
We conclude that
\[
\sqrt{n}\big[ \rho^{(n)} - \rho\big] \dto \frac{1}{\alpha} \Big(\E\big[\max(0,X-\hat{z}\big]\Big) W,
\]
where $W$ is a normal random variable with zero mean and variance
\[
\text{Var}[W] = \E \Big[ \Big( \max(0, X- \hat{z}) - \E\big[ \max(0, X- \hat{z} \big] \Big)^2 \Big].
\]
We note that the assumption of bounded support of the random variable $X$ is not really essential because, we could take a sufficiently large set $Z$, which would contain the corresponding quantile of the distribution function of $X$ and all empirical quantiles for sufficiently large sample sizes.

Additionally, we refer to another method for estimating the average value at risk at all levels simultaneously, which is discussed in \cite{DP}, where also central limit formulae under different set of assumptions are established.
$\hfill\blacktriangle$}
\end{example}

\begin{example}[{\rm \textbf{Higher-order Inverse Risk Measures}}]
\label{e:inverse}
{\rm
Consider a higher order inverse risk measure \eqref{Krokhmal} with $c=\frac{1}{\alpha}>1$:
\begin{equation}
\label{inverse}
\rho[X] = \min_{z\in \R} \Big\{ z + c \big\|\max(0, X- z)\big\|_p \Big\},
\end{equation}
where $p>1$ and $\|\cdot\|_p$ is the norm in the $\Lc^p$ space.
We  define:
\begin{align*}
f_1(z,y) &= z + c y^{\frac{1}{p}},\\
f_2(z,x) &= \big(\max(0, x- z)\big)^p.
\end{align*}
If the support of the distribution of $X$ is bounded, so is the support of all empirical
distributions. In this case, we can find a bounded set $Z$ (albeit larger than the support of $X$)
such that all solutions of problems \eqref{inverse} belong to this set.
For $p>1$ and $c>1$  problem~\eqref{inverse} has a unique solution, which we denote by $\hat{z}$.

The plug-in empirical estimators of \eqref{inverse} have the following form
\begin{equation}
\label{toillustrate}
\rho^{(n)} = \min_{z\in \R} \Big\{ z + c \Big(
\frac{1}{n} \sum_{i=1}^n \big(\max(0, X_i- z)\big)^p\Big)^{\frac{1}{p}} \Big\}.
\end{equation}
Observe that all assumptions of Theorem~\ref{t:opt-est} and Corollary \ref{c:cor1} are satisfied. We conclude that
\begin{equation}
\label{moreillustrate}
\sqrt{n}\big[ \rho^{(n)} - \rho\big] \dto \frac{c}{p} \Big(\E\big[\big(\max(0,X-\hat{z})\big)^p\big]\Big)^\frac{1-p}{p} W,
\end{equation}
where $W$ is a normal random variable with zero mean and variance
\[
\text{Var}[W] = \E \Big[ \Big( \big(\max(0, X- \hat{z})\big)^p - \E\big[ \big(\max(0, X- \hat{z})\big)^p\big] \Big)^2 \Big].
\]
$\hfill\blacktriangle$}
\end{example}

\section{Estimation of Optimized Composite Risk Functionals}
\label{last}

In this section, we are concerned with optimization problems in which the objective function is a composite risk functional. Our goal is to establish a central limit formula for the optimal value of such problems.

Our methods allow for the analysis of more complicated structures of optimized risk functionals:
\begin{equation}
\label{nested-optimized}
\varrho=\min_{u\in U}\E\Big[f_1\Big(u,\E\big[f_2\big(u,\E[\;\cdots f_k(u,\E[f_{k+1}(u,X)],X)]\;\cdots,X\big)\big],X\Big)\Big].
\end{equation}
Here $X$ is a $m$-dimensional random vector, $f_j:U\times\R^{m_j} \times \R^m \to \R^{m_{j-1}}$, $j=1,\dots,k$, with $m_0=1$ and $f_{k+1}:U\times\R^m\to\R^{m_k}$.
We assume that $U$ is a compact set in a finite dimensional space and
the optimal solution $\hat{u}$ of this problem is unique.

%Then, by virtue of Theorem \ref{t:opt-est}, we can fix $u$ at the value $\hat{u}$ in the procedure of section \ref{s:estimation}.

We define the functions:
\begin{align*}
\bar{f}_j(u,\eta_j)  &=   \int_{\X} f_j({u},\eta_j,x)\,P(dx),\quad j=1,\dots,k, \\
\bar{f}_{k+1}(u)  &=   \int_{\X} f_{k+1}({u},x)\,P(dx),\\
\intertext{and the quantities}
\mu_{k+1} &= \bar{f}_{k+1}(\hat{u}),\\
\mu_j &= \bar{f}_{j}(\hat{u},\mu_{j+1}),\quad j=1,\dots,k.
\end{align*}
We  assume that compact sets $I_1,\dots,I_{k}$ are selected
so that $\text{int}(I_k) \supset \bar{f}_{k+1}(U)$, and $\text{int}(I_j)\supset \bar{f}_{j+1}(U,I_{j+1})$, $j=1,\dots,k-1$.
Let us define the space
\[
\Hc=\C_1^{(0,1)}(U\times I_1)\times \C_{m_1}^{(0,1)}(U\times I_2)\times \dots
\C_{m_{k-1}}^{(0,1)}(U\times I_k)\times \C_{m_{k}}(U),
\]
where $\C_{m_{j-1}}^{(0,1)}(U\times I_j)$ is the space of $\R^{m_{j-1}}$-valued continuous functions on $U\times I_j$,
which are differentiable with respect to the second argument with continuous derivatives on $U\times I_j$.
We denote the Jacobian of $f_j({u},\eta_j,x)$ with respect to the second argument at  $\eta_j^{*}\in I_j$ by $f'_j(u,\eta_j^{*},x)$.
For every direction $d\in\Hc$, we define recursively the sequence of vectors:
\begin{equation}
\label{recursive2}
\begin{gathered}
\xi_{k+1}(d) = d_{k+1},\\
\xi_{j}(d) = \int_{\X}  f'_{j}(\hat{u},\mu_{j+1},x)\xi_{j+1}(d)\,P(dx) + d_{j}\big(\mu_{j+1}\big),\quad j=k,k-1,\dots,1.
\end{gathered}
\end{equation}
The empirical estimator is
\begin{align*}
%\label{emp-nested-optimized}
\varrho^{(n)}=\min_{u\in U}\sum_{i=1}^n \frac{1}{n}\Big[f_1\Big(u,\sum_{i=1}^n \frac{1}{n}\big[f_2\big(u,\sum_{i=1}^n \frac{1}{n}[&\;\cdots f_k(u,\sum_{i=1}^n \frac{1}{n}[f_{k+1}(u,X)],X)]\\
&\;\cdots,X\big)\big],X\Big)\Big].
\end{align*}
We establish the following result.
\begin{theorem}
\label{t:main-opt}
Suppose the following conditions are satisfied:
\begin{tightlistleft}{iii}
\item[(i)]
$\int_\X \| f_j(u,\eta_j,x)\|^2 \;P(dx)<\infty$ for all $\eta_j\in I_j$, $u\in U$, $j=1,\dots,k$, and \break $\int_\X \| f_{k+1}(u,x)\|^2 P(dx)<\infty$
for all $u\in U$;
%\item[(i)]  The functions $f_j(u,\eta_j,\cdot)$, $j=1,\dots,k$,
%are $P$-integrable with respect to the third argument, for all $\eta_j\in I_j$ and $u\in U$; the function
%$f_{k+1}(u,\cdot)$ is $P$-integrable with respect to the second argument, for all $u\in U$;
%\item[(iii)]  Integrable functions $g_j(\cdot)$, $j=1,\dots,k$, exist such that $\| f_j(u,\eta_j,x)\|\le g_j(x)$
%for all $\eta_j\in I_j$, $u\in U$, and $x\in \X$;
\item[(ii)]  The functions $f_j(\cdot,\cdot,x)$, $j=1,\dots,k$, and $f_{k+1}(\cdot,x)$ are Lipschitz continuous for every $x\in \X$:
\begin{align*}
\|f_j(u',\eta_j',x)- f_j(u'',\eta_j'',x)\| &\le \gamma_j(x)\big(\|u'-u''\|+ \|\eta_j'-\eta_j''\|\big),\quad j=1,\dots,k.\\
\|f_{k+1}(u',x)- f_{k+1}(u'',x)\| &\le \gamma_{k+1}(x)\|u'-u''\|,
\end{align*}
for all  $\eta_j',\eta_j''\in I_j$, $u',u''\in U$; moreover, $\int \gamma_j^2(x)\;P(dx) <\infty$, $j=1,\dots,k+1$;
\item[(iii)]  The functions $f_j(u,\cdot,x)$, $j=1,\dots,k$, are continuously differentiable
for every $x\in \X$, $u\in U$; moreover, their derivatives are continuous with respect to the first two arguments.
\end{tightlistleft}
Then
\[
\sqrt{n}\big[ \rho^{(n)} - \rho\big] \dto \xi_1(W),
\]
where $W(\cdot) = \big( W_1(\cdot), \dots , W_{k}(\cdot), W_{k+1}\big) $ is a zero-mean Brownian process on
$I = I_1\times I_2 \times \dots \times I_k$. Here  $W_j(\cdot)$  is a  Brownian process
of dimension $m_{j-1}$ on $I_j$, $j=1,\dots,k$,  and  $W_{k+1}$ is an $m_k$-dimensional normal vector.
The covariance function of $W(\cdot)$ has the following form
\begin{equation}
\label{covariance2}
\begin{aligned}
&\cov\big[ W_i(\eta_i), W_j(\eta_j) \big]
= {}\\
&\qquad \int_{\X} \big[ f_i(\hat{u},\eta_i,x) - \bar{f}_i(\hat{u},\eta_i)\big] \big[ f_j(\hat{u},\eta_j,x) - \bar{f}_j(\hat{u},\eta_j)\big]^\top \;P(dx),\\
&\qquad\qquad \eta_i\in I_i,\ \eta_j\in I_j,\ i,j=1,\dots,k\\
&\cov\big[ W_i(\eta_i), W_{k+1} \big]
= {}\\
&\qquad
\int_{\X} \big[ f_i(\hat{u},\eta_i,x) - \bar{f}_i(\hat{u},\eta_i)\big] \big[ f_{k+1}(\hat{u},x) - \bar{f}_{k+1}(\hat{u})\big]^\top \;P(dx),\\
&\qquad \qquad \eta_i\in I_i,\ i=1,\dots,k\\
&\cov\big[ W_{k+1}, W_{k+1} \big]
= {}\\
&\qquad \int_{\X} \big[ f_{k+1}(\hat{u},x) - \bar{f}_{k+1}(\hat{u})\big] \big[ f_{k+1}(\hat{u},x) - \bar{f}_{k+1}(\hat{u})\big]^\top \;P(dx).
\end{aligned}
\end{equation}
\end{theorem}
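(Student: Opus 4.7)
My plan is to combine the nested chain-rule argument from the proof of Theorem~\ref{t:main} with the Danskin-type optimal-value argument of Theorem~\ref{t:opt-est}. First I would introduce the lifted operator $\varPsi: U\times \Hc \to \R$ by
\[
\varPsi(u,h) = h_1\big(u, h_2(u,\, \cdots h_k(u, h_{k+1}(u))\, \cdots )\big),
\]
and the optimal-value functional $v(h) = \min_{u\in U}\varPsi(u,h)$. Writing $h^{(n)}$ for the vector of empirical means with block components $h_j^{(n)}(u,\eta_j) = \frac{1}{n}\sum_{i=1}^n f_j(u,\eta_j,X_i)$ for $j=1,\dots,k$ and $h_{k+1}^{(n)}(u) = \frac{1}{n}\sum_{i=1}^n f_{k+1}(u,X_i)$, one verifies $\rho = v(\bar{f})$ and $\rho^{(n)} = v(h^{(n)})$, so the theorem reduces to a delta-method statement for $v$ at $\bar{f}$.

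Next I would establish a uniform CLT $\sqrt{n}(h^{(n)}-\bar{f}) \dto W$ in the product of supremum-norm spaces over $U\times I_j$ (and over $U$ in the last block). Compactness of $U$ and $I_j$ together with the square-integrability condition (i) and the Lipschitz bound (ii) with square-integrable envelopes $\gamma_j$ make each class $\{f_j(u,\eta_j,\cdot): (u,\eta_j)\in U\times I_j\}$ Donsker, by the standard criterion in \cite[Ex.~19.7]{vdV}. The covariance of the limiting Gaussian process $W$ is then exactly \eqref{covariance2}.

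Third, I would verify that $\bar{f}$ lies in the interior of the admissible domain of $\varPsi$ in $\Hc$ and compute its Hadamard derivative uniformly in $u$. Assumption (iii) together with the integrable Lipschitz envelope from (ii) permits differentiation under the integral sign, giving $\bar{f}_j(u,\eta_j)\in \C^{(0,1)}_{m_{j-1}}(U\times I_j)$ with $\bar{f}_j'(u,\eta_j) = \int f_j'(u,\eta_j,x)\,P(dx)$, and the recursive estimate from the proof of Theorem~\ref{t:main} together with the interior condition $\mathrm{int}(I_j) \supset \bar{f}_{j+1}(U,I_{j+1})$ confirms that $\bar{f}$ is interior. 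A recursive chain-rule computation of the same form as \eqref{recursive}, carried uniformly in $u$, then yields $\varPsi_h'(u,\bar{f})\,d = \zeta_1(u,d)$, where
\begin{gather*}
\zeta_{k+1}(u,d) = d_{k+1}(u),\\
\zeta_j(u,d) = \bar{f}_j'(u,\mu_{j+1})\,\zeta_{j+1}(u,d) + d_j(u,\mu_{j+1}),\quad j=k,\dots,1.
\end{gather*}
Theorem~4.13 of \cite{BonnansShapiro}, applied with the unique minimizer $\hat{u}$ and with $\varPsi(\cdot,\bar{f})$ continuous on the compact set $U$, then gives that $v$ is Hadamard directionally differentiable at $\bar{f}$ with $v'(\bar{f};d) = \zeta_1(\hat{u},d) = \xi_1(d)$, recovering \eqref{recursive2}.

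Finally I would invoke the functional delta method of \cite{Romisch} to push the uniform CLT through $v$ and conclude $\sqrt{n}(\rho^{(n)}-\rho) \dto \xi_1(W)$. The main obstacle will be the Hadamard differentiability of $\varPsi(u,\cdot)$ \emph{uniformly in} $u$, required so that the Danskin-type theorem applies: continuity of the derivatives of the $f_j$ in $(u,\eta_j)$ from (iii), together with the compactness and the uniform Lipschitz control of (ii), is precisely what supplies this uniformity. Uniqueness of $\hat{u}$ is then what collapses the general ``minimum over the solution set'' representation of $v'(\bar{f};d)$ to the single directional derivative $\xi_1(d)$ appearing in the conclusion.
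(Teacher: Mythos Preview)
Your proposal is correct and follows essentially the same route as the paper: lift to the operator $\varPsi(u,h)$, establish the Donsker property via assumptions (i)--(ii) and \cite[Ex.~19.7]{vdV}, invoke \cite[Thm.~4.13]{BonnansShapiro} at the unique minimizer $\hat{u}$ to differentiate $v(\cdot)=\min_{u\in U}\varPsi(u,\cdot)$, and then reuse the recursive chain-rule computation of Theorem~\ref{t:main} with $\hat{u}$ inserted. One small imprecision: the Donsker limit $W$ is a process on $U\times I$, and \eqref{covariance2} is its covariance restricted to $u=\hat{u}$; similarly, in your recursion for $\zeta_j(u,d)$ the inner point should be the $u$-dependent composite $\bar f_{j+1}(u,\cdots)$ rather than $\mu_{j+1}$, which is already specialized to $\hat{u}$---but after collapsing to $\hat{u}$ via the Danskin step this recovers \eqref{recursive2} as you intend.
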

\begin{proof}
We follow the main line of argument of the proof of Theorem \ref{t:main}.
We define  $M= m_0+m_1+\dots+m_k$ and the vector-valued function
$
f: U\times I \times \X \to \R^{M}
$
with block coordinates
$f_j(u,\eta_j,x)$, $j=1,\dots,k$, and $f_{k+1}(u,x)$. Similarly, we define $\bar{f}: U\times I  \to \R^{M}$ with block
coordinates $\bar{f}_j(u,\eta_j)$, $j=1,\dots,k$, and $\bar{f}_{k+1}(u)$.
Consider the
empirical estimates of the function $\bar{f}(u,\eta)$:
\[
h^{(n)}(u,\eta)  =    \frac{1}{n}\sum_{i=1}^n f(u,\eta,X_i),\quad n =1,2,\dots.
\]
Due to our assumptions, for sufficiently large $n$ all these functions are elements of the space $\Hc$.

Owing to assumptions (i)--(ii),  the class of functions $f(u,\eta,\cdot)$, $u\in U$, $\eta\in I$, is \emph{Donsker},
that is  the following uniform  Central Limit Theorem holds (see \cite[Ex. 19.7]{vdV}):
\begin{equation}
\label{Donsker3}
\sqrt{n}\big( h^{(n)} - \bar{f} \big) \dto W,
\end{equation}
where $W$ is a zero-mean Brownian process on $U\times I$ with covariance function
\begin{multline}
\label{covariance3}
\cov\big[ W(u',\eta'), W(u'',\eta'')\big]
=  {\quad} \\
\int_{\X} \big[ f(u',\eta',x) - \bar{f}(u',\eta')\big] \big[ f(u'',\eta'',x) - \bar{f}(u'',\eta'')\big]^\top \;P(dx).
\end{multline}
This fact will allow us to establish asymptotic properties of the sequence $\big\{\rho^{(n)}\big\}$.
We define an operator $\Psi: \Hc \to \R$ as follows
\[
\varPsi(u,h) = h_1\Big(u,h_2\big(u,\;\cdots h_k(u,h_{k+1}(u))\;\cdots\big)\Big).
\]
By definition,
\begin{align*}
\rho(X) &= \min_{u\in U}\varPsi\big(u,\bar{f}\big),\\
\rho^{(n)} &= \min_{u\in U}\varPsi\big(u,h^{(n)}\big).
\end{align*}
To apply {Delta Theorem} to the sequence $\big\{ \rho^{(n)} \big\}$, we have to verify Hadamard directional differentiability of the optimal value function $v(\cdot) = \min_{u\in U}\varPsi(u,\cdot)$ at $\bar{f}$. Observe that our assumptions imply that
the conditions of \cite[Thm. 4.13]{BonnansShapiro} are satisfied. As the optimal solution set is a singleton,
the function $v(\cdot)$ is differentiable at $\bar{f}$ with the Fr\'echet derivative
\[
v'(\bar{f}) = \varPsi'(\hat{u},\bar{f}),
\]
where $\varPsi'(u,f)$ is the Fr\'echet derivative of $\varPsi(u,\cdot)$ at $f$.
The remaining derivations are identical as those in the proof of Theorem \ref{t:main}. We only need
substitute $\hat{u}$ as an additional argument of all functions involved.
\end{proof}

\begin{example}[{\rm \textbf{Optimization problems with mean--semideviation}}]
{\rm
Consider now an optimization problem involving a mean--semideviation measure of risk
\begin{equation}
\label{msd-min}
\min_{u\in U} \rho[\varphi(u,X)] = \E[\varphi(u,X)] + \kappa \Big(\E \big[ \big(\varphi(u,X) - \E[\varphi(u,X)]\big)_+^p\big]\Big)^\frac{1}{p},
\end{equation}
where $\varphi:\R^d \times \X \to \R$.
We have
\begin{align*}
{f}_1(\eta_1,u,x) &= \kappa\eta_1^{\frac{1}{p}} + \varphi(u,x),\\
{f}_2(\eta_2,u,x) &= \big\{ \big[\max\{0,\varphi(u,x)- \eta_2\}\big]^p \big\},\\
{f}_3(u,x) &= \varphi(u,x),
\end{align*}
and
\begin{align*}
\bar{f}_1(\eta_1,u) &= \kappa\eta_1^{\frac{1}{p}} + \E[\varphi(u,X)],\\
\bar{f}_2(\eta_2,u) &= \E \big\{ \big[\max\{0,\varphi(u,X)- \eta_2\}\big]^p \big\},\\
\bar{f}_3(u) &= \E[\varphi(u,X)].
\end{align*}
We assume that $p>1$.
%, $I_2\subset\R$ is a compact interval containing the support of the random variable $\varphi(u,X)$, for all $u\in U$,
%and the interval $I_1=[0,a]\subset\R$ is chosen so that $a\geq |\varphi(u,X_i) - \E[\varphi(u,X)]|^p$, $u\in U$.
%The space $\Hc$ is $\C_1(I_1)\times \C_{2}(I_2)\times \R$  and we take a direction $d\in \Hc$.
Suppose $\hat{u}$ is the unique solution of problem \eqref{msd-min}.
We set $\mu_3 = \E[\varphi(\hat{u},X)]$.
Then  $\mu_2 = \E \big\{ \big[\max\{0,\varphi(\hat{u},X)- \E[\varphi(\hat{u},X)]\}\big]^p \big\}$ and $\mu_1=\rho(X)$.
Following \eqref{recursive2}, we calculate
\[
\xi_2(d)
= \bar{f}_2'(\mu_{3},\hat{u}; d_{3}) + d_{2}(\mu_{3}) = - p \E \big\{ \big[\max\{0,\varphi(\hat{u},X)- \mu_3\}\big]^{p-1} \big\}d_3 + d_2(\mu_3),
\]
%\begin{multline*}
\[
\xi_1(d) = \bar{f}'_{1}\big(\mu_2,\hat{u};\xi_{2}(d)\big) + d_{1}\big(\mu_2\big)
=  \frac{\kappa}{p}\mu_2^{\frac{1}{p} - 1}\xi_{2}(d) + d_{1}\big(\mu_2\big).
\]
%\end{multline*}
We obtain the expression
\begin{multline}
\label{longformula2}
\varPsi_1'(\bar{f};W) = W_1\big( \E \big\{ \big[\max\{0,\varphi(\hat{u},X)- \E[\varphi(\hat{u},X)]\}\big]^p \big\} \big) + {\ }  \\
\qquad \frac{\kappa}{p}\Big( \E \big\{ \big[\max\{0,\varphi(\hat{u},X)- \E[\varphi(\hat{u},X)]\}\big]^p \big\} \Big)^{\frac{1-p}{p}} \times\\
\qquad \Big(W_2\big(\E[\varphi(\hat{u},X)]\big)-p  \E \big\{ \big[\max\{0,\varphi(\hat{u},X)- \E[\varphi(\hat{u},X)]\}\big]^{p-1} \big\}W_3\Big).
\end{multline}
The covariance structure of the process $W$ can be determined from \eqref{covariance3}, similar to Example \ref{e:2}.
The process $W_1(\cdot)$ has the constant covariance function:
\[
\text{cov}\big[W_1(\eta_1(\hat{u})),W_1(\eta_1(\hat{u}))\big]  =
\text{Var}[\varphi(\hat{u},X)].
\]
The third coordinate, $W_3$ has variance equal to $\text{Var}[\varphi(\hat{u},X)]$.
Also, $$\text{cov}(W_1(\eta_1(\hat{u})),W_3) = \text{Var}[\varphi(\hat{u},X)],$$ and thus
$W_1$ and $W_3$ have the same normal distribution and are perfectly correlated.

The variance function of $W_2(\cdot)$ and its covariance with $W_1$ (and $W_3$) can be calculated in a similar way:
\begin{multline*}
\text{Var}[W_2(\E[\varphi(\hat{u},X)])]  = \E\Big\{\Big( \big[\max\{0,\varphi(\hat{u},X)- \E[\varphi(\hat{u},X)]\}\big]^p - \\ \E\big(\big[\max\{0,\varphi(\hat{u},X)- \E[\varphi(\hat{u},X)]\}\big]^p\big) \Big)
\Big( \varphi(\hat{u},X)-\E[\varphi(\hat{u},X)] \Big)\Big\}.
\end{multline*}
We conclude that
\[
\sqrt{n}\big[ \rho^{(n)} - \rho\big] \dto \mathcal{N}(0,\sigma^2),
\]
where the variance $\sigma^2$ can be calculated in a routine way as a variance of the right hand side of \eqref{longformula2},
by substituting the expressions for variances and covariances of $W_1$, $W_2$, and $W_3$.
$\hfill\blacktriangle$
}
\end{example}
\section{A simulation study}
In this section we illustrate the convergence of some estimators discussed in this paper to the limiting  normal distribution. Many previously known results for the case $p=1$ have been investigated thoroughly in the literature (see, e.g., \cite{SRIRF}) and we will not dwell upon these here. We will only illustrate the case about Higher-order Inverse Risk Measures as discussed in Example 4 for the case $p>1.$
More specifically, we take independent identically distributed observations $X_i, i=1,2,\dots, n$ from an independent identically distributed $X\sim \mathcal{N}(0,3)$ observations. We take $\epsilon=0.05$ and $p=2.$
In that case $c=20.$ Numerical calculation in Matlab delivers the theoretical argument minimum $z^*=14.5048$ and the value of the risk in (\ref{inverse}) being $\rho [X]=15.5163.$ The standard deviation of the random variable in the right hand side of (\ref{moreillustrate}) is 16.032.  The plug-in estimator $\rho^{(n)}$ of this risk can be represented as a solution of a convex optimization problem with convex constraints and hence a unique solution can be found by any package that solves such type of problems. We have used the {\tt{cvx}} package that can be operated within {\tt{matlab}}.
\noindent Denoting $d_i=\max  (X_i-z,0)  , i=1,2,\dots,n$ and putting all $d_i, i=1,2,\dots,n$ in a vector $\textbf{d}$ we
can rewrite our optimization problem as follows:
\begin{equation}
\label{convconv}
\begin{aligned}
\min_{z,\textbf{d}} \ & \big\{ c\frac{1}{n^{1/p}}  (\sum_{i=1}^n  d_i^p ) ^{1/p} +z\big\} \\
\text{subject to}\ &
 X_i-z \le d_i, \ d_i\ge 0, \ i=1,2,\dots,n.
 \end{aligned}
\end{equation}
        The numerical solution to this optimization problem gives us the estimator  $\rho^{(n)}.$ To get an idea about the speed of convergence to the limiting distribution in (\ref{toillustrate}) we simulate $m=2500$ risk estimators $\rho^{(n)}_j, j=1,2,\dots,2500$ for a given sample size $n$ and draw their histogram. The number of bins for the histogram is determined by the rough ``squared root of the sample size" rule. This histogram is superimposed to the  $\mathcal{N}(15.5163, (16.032/\sqrt{n})^2)$ density. As $n$ is increased, our theory suggests that the histogram and the normal density graph will look more and more similar in shape. Their closeness indicates how quickly the central limit theorem pops up in this case.

\begin{figure}[htbp]
  \begin{center}
    \mbox{
      \subfigure[$n=1000$]{\scalebox{0.40}{\includegraphics{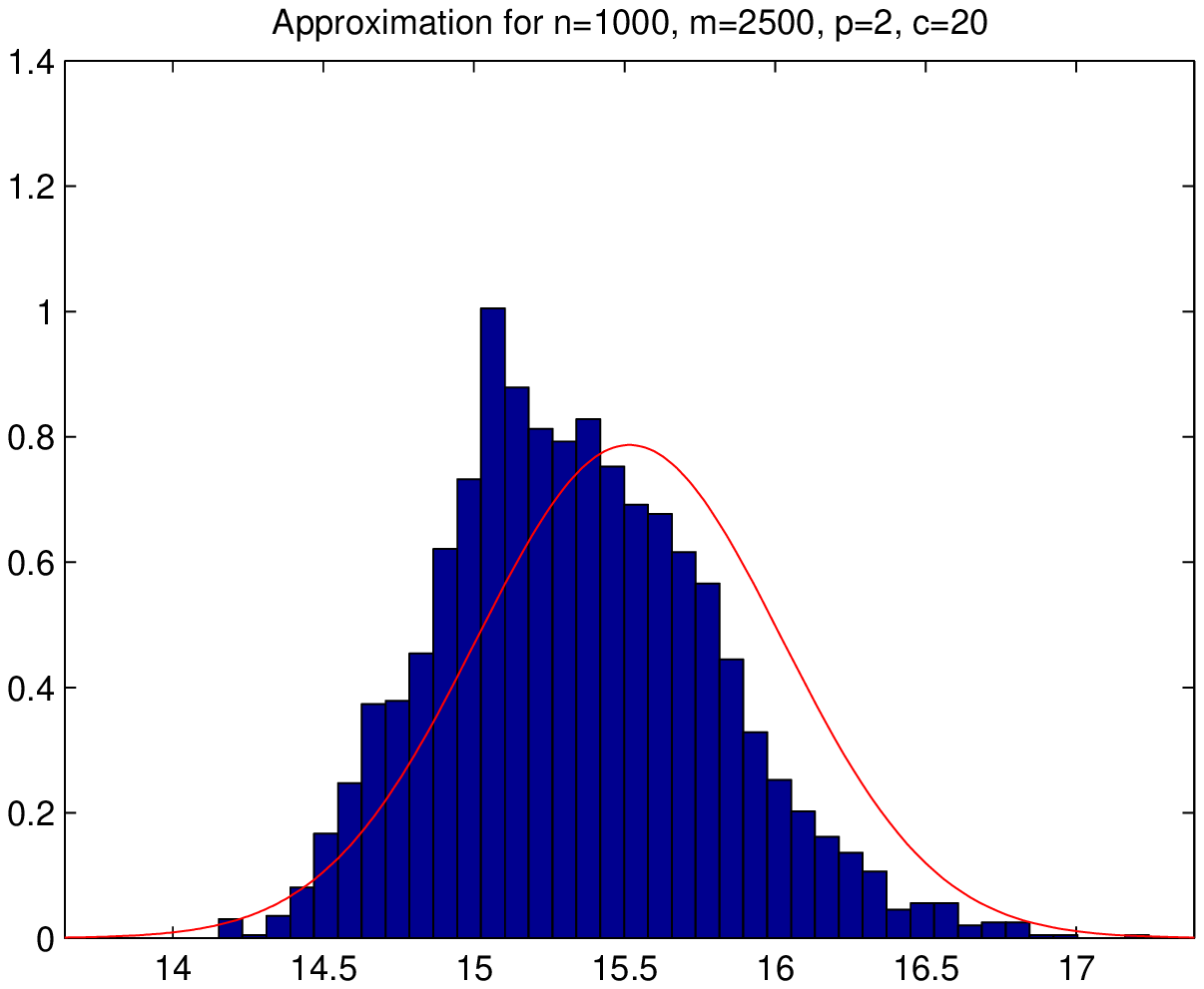}}}
      \subfigure[$n=2000$]{\scalebox{0.40}{\includegraphics{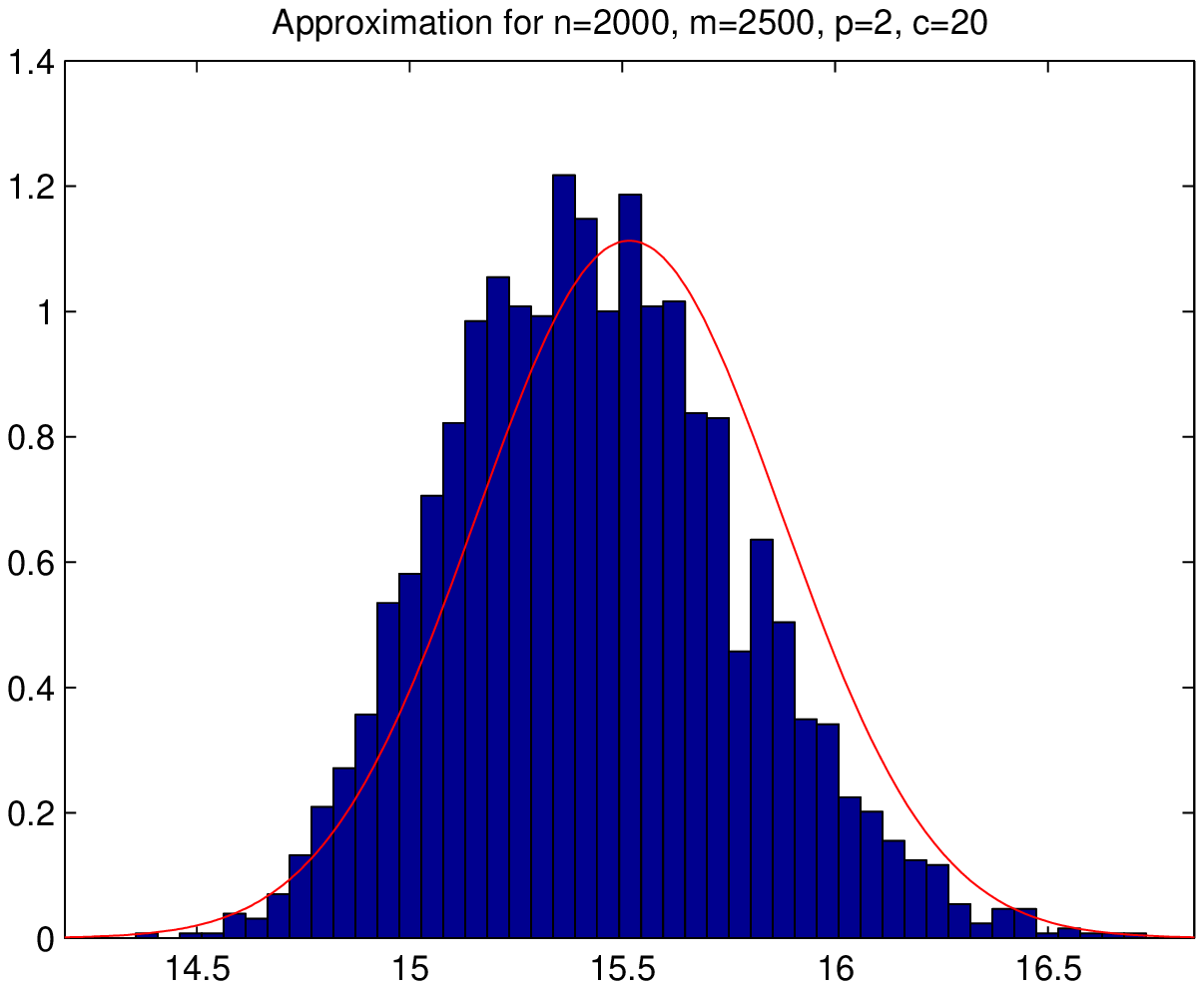}}}
      }
      \mbox{
      \subfigure[$n=4000$]{\scalebox{0.40}{\includegraphics{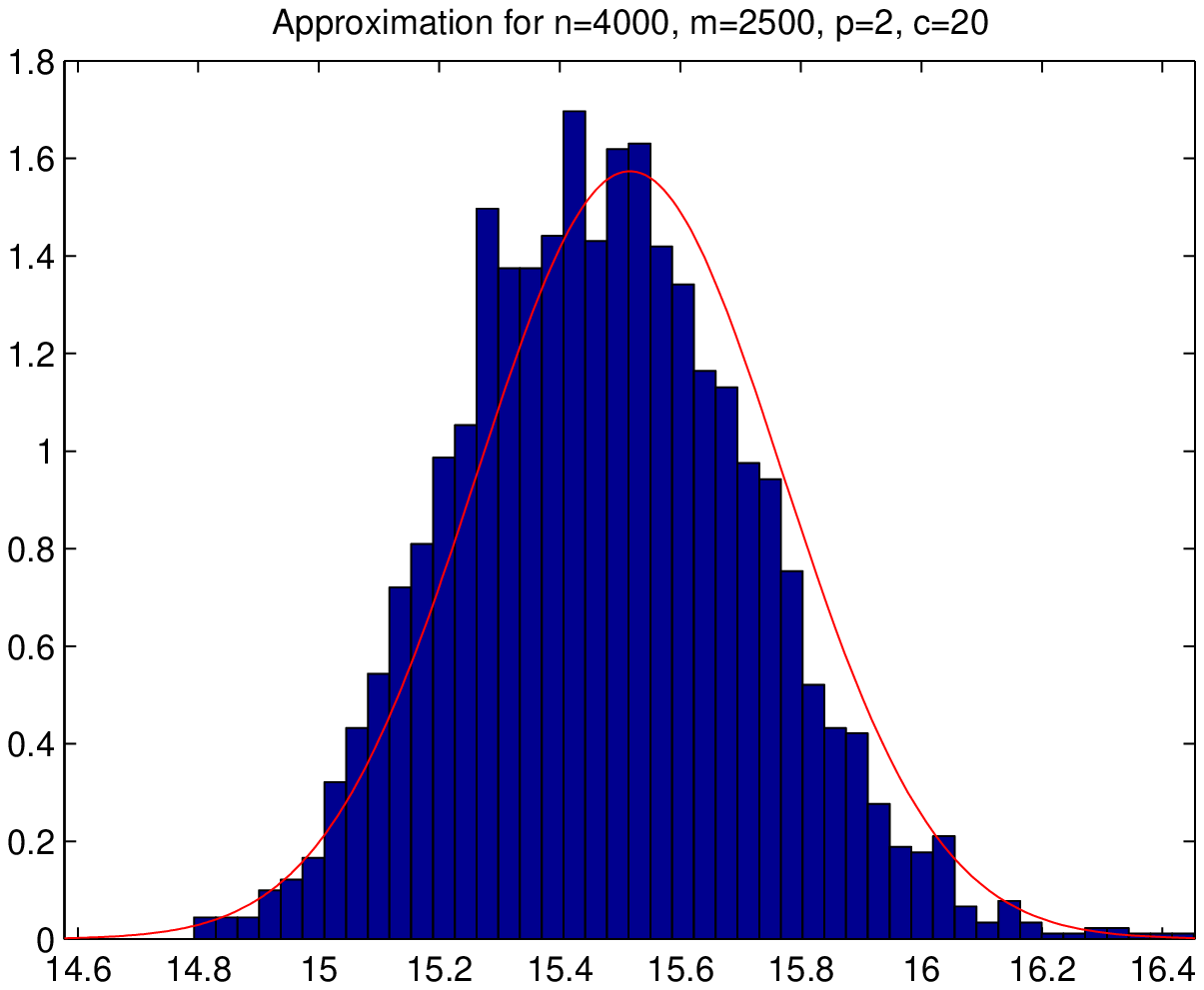}}}
      \subfigure[$n=8000$]{\scalebox{0.40}{\includegraphics{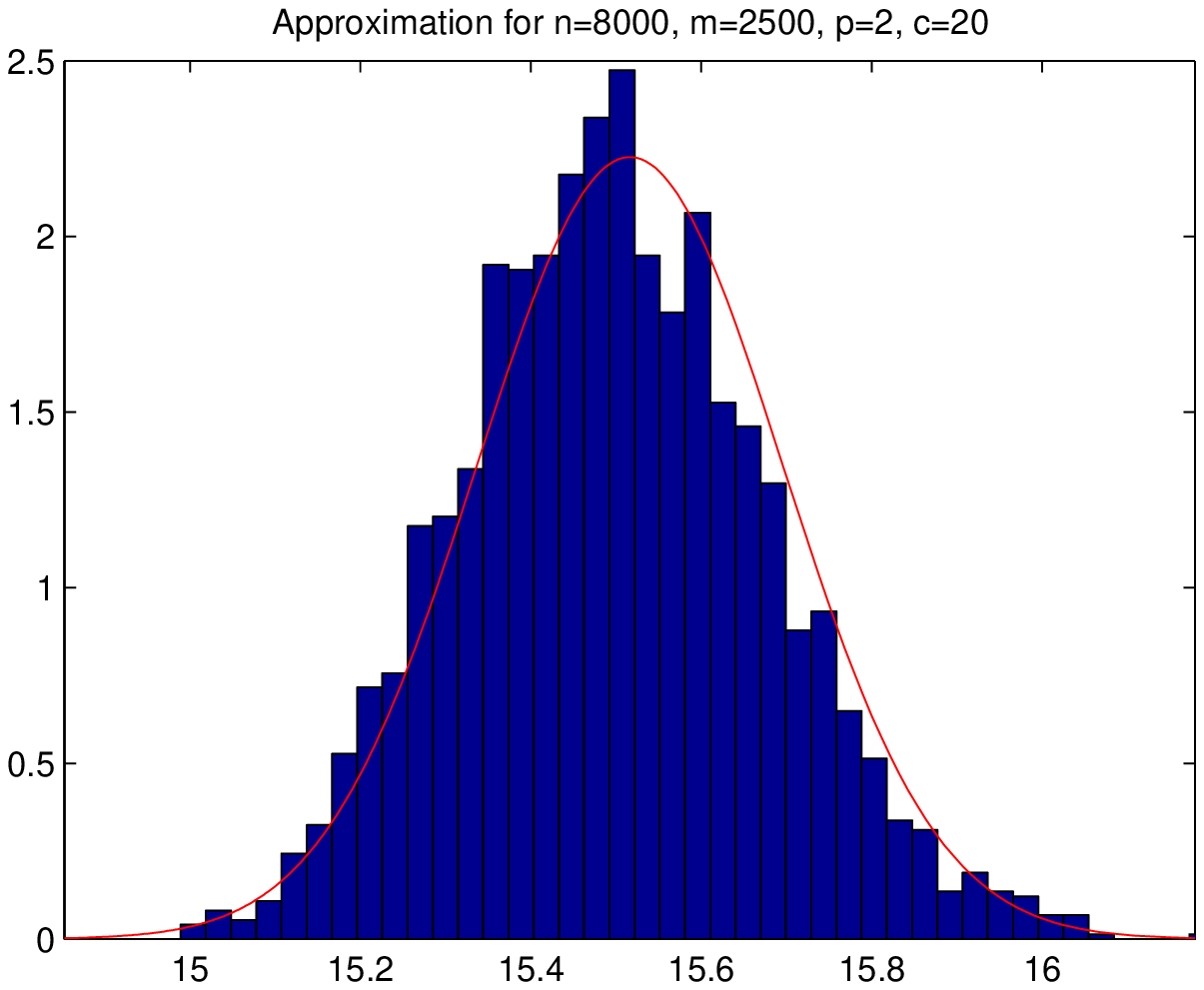}}}
      }
    \caption{\textit{Density histogram of the distribution of the estimator $\rho_n $ for increasing  values of $n$ and its normal approximation using Theorem 2 and $X\sim \mathcal{N}(10,3).$ }}
    \label{fig1}
  \end{center}
\end{figure}
\begin{figure}[htbp]
  \begin{center}
    \mbox{
      \subfigure[$df=60$]{\scalebox{0.40}{\includegraphics{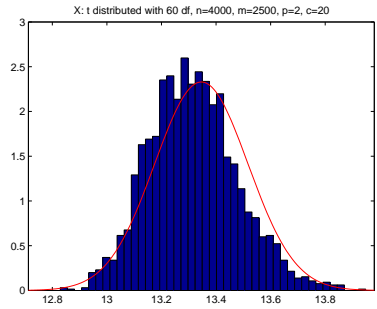}}}
      \subfigure[$df=8$]{\scalebox{0.40}{\includegraphics{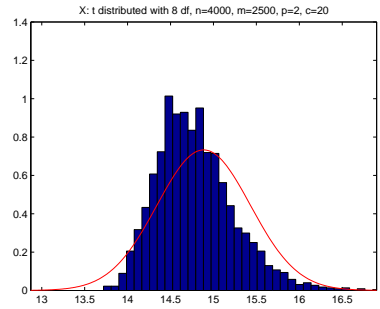}}}
      }
      \mbox{
      \subfigure[$df=6$]{\scalebox{0.40}{\includegraphics{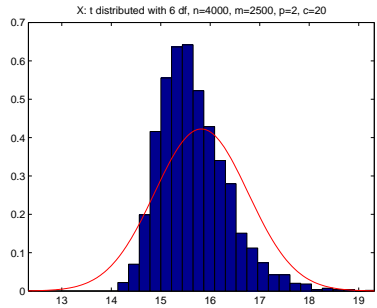}}}
      \subfigure[$df=4$]{\scalebox{0.40}{\includegraphics{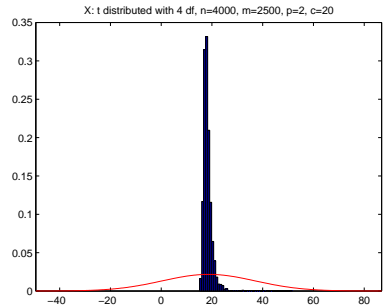}}}
      }
    \caption{\textit{Density histogram of the distribution of the estimator $\rho_n $ for  $n=4000$ and $X\sim t_{\nu}$ with $\nu$ being 60, 8, 6 and 4. }}
    \label{fig2}
  \end{center}
\end{figure}
\begin{figure}[htbp]
  \begin{center}
    \mbox{
      \subfigure[$p=1$]{\scalebox{0.40}{\includegraphics{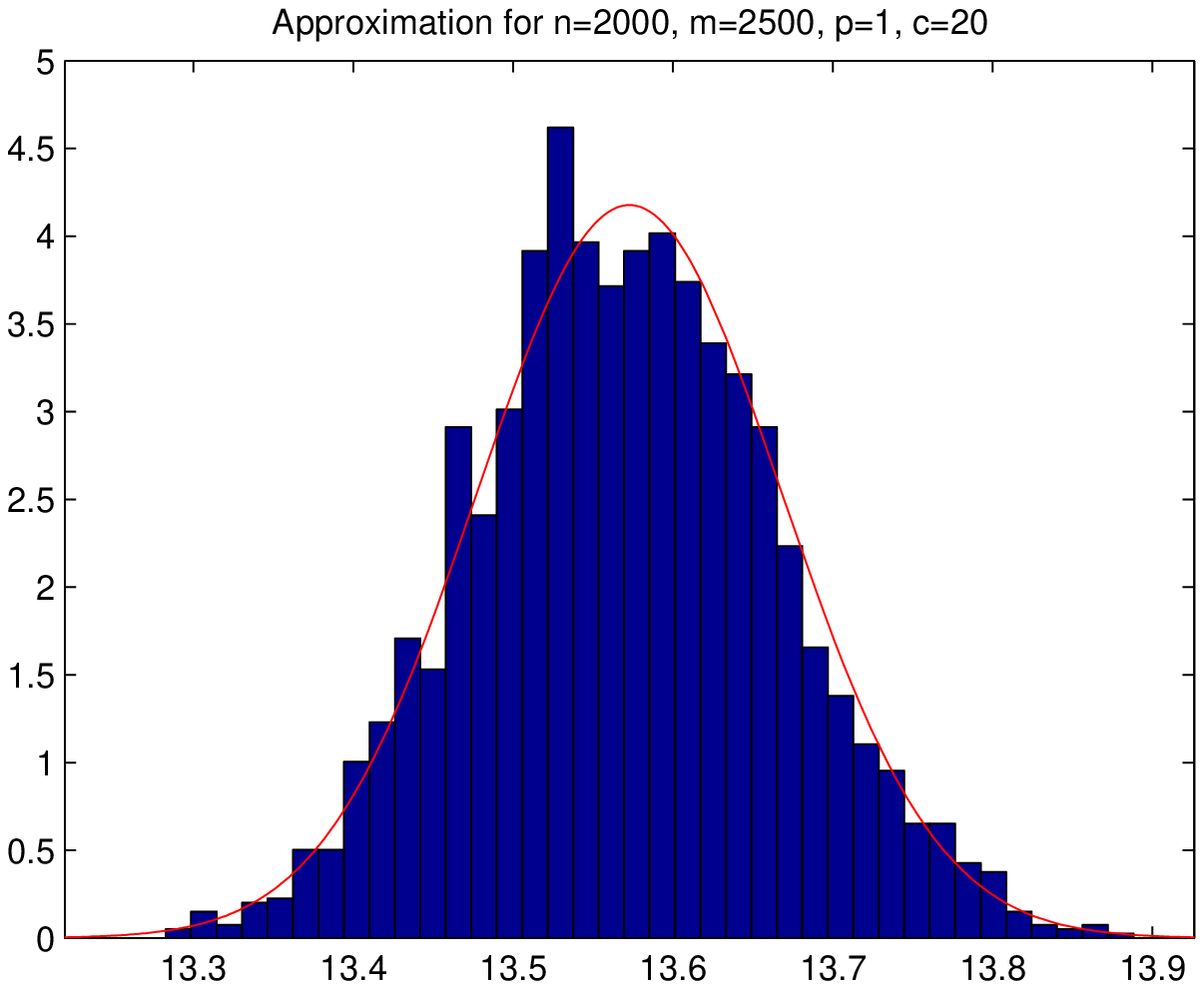}}}
      \subfigure[$p=1.5$]{\scalebox{0.40}{\includegraphics{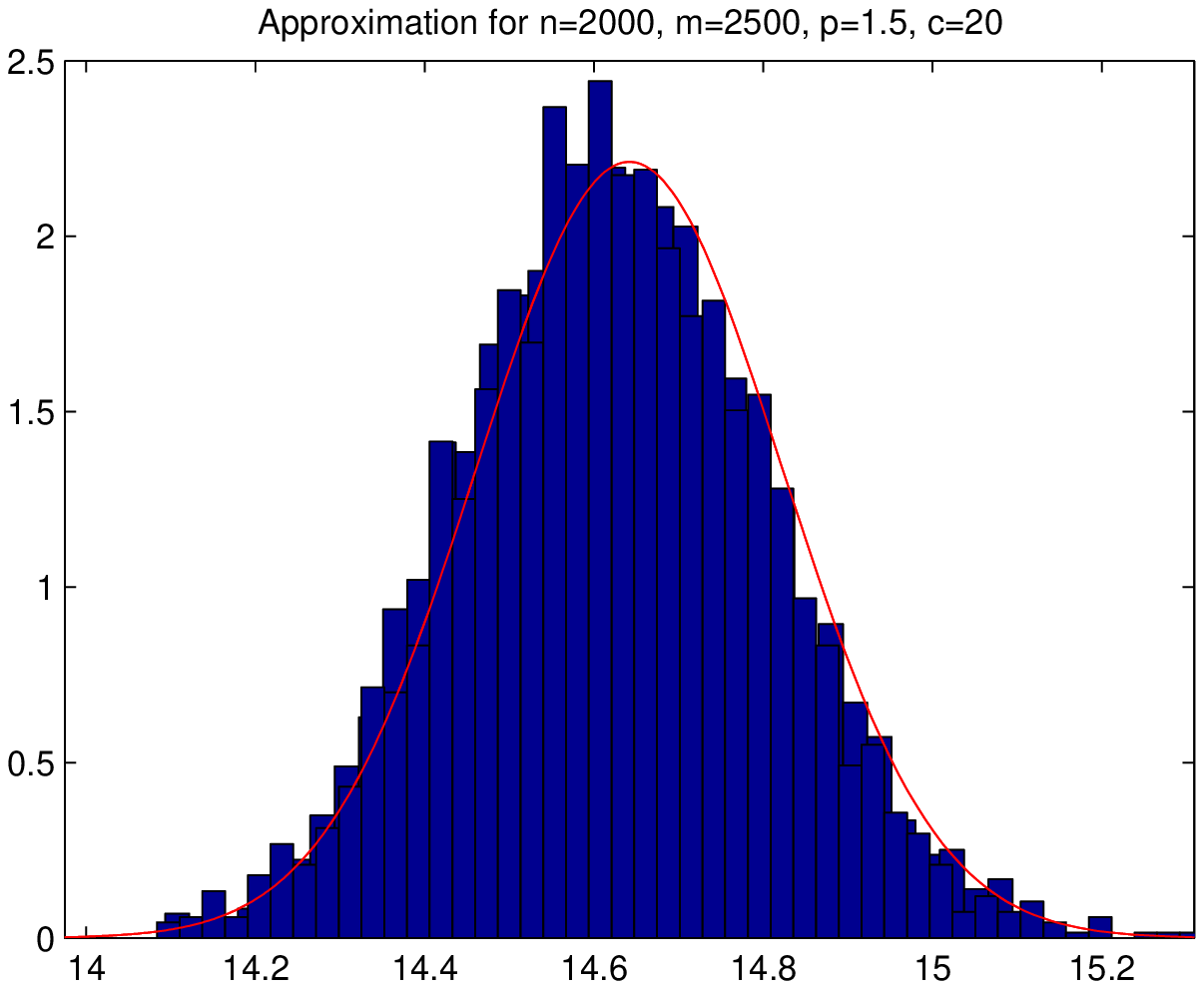}}}
      }
      \mbox{
      \subfigure[$p=2$]{\scalebox{0.40}{\includegraphics{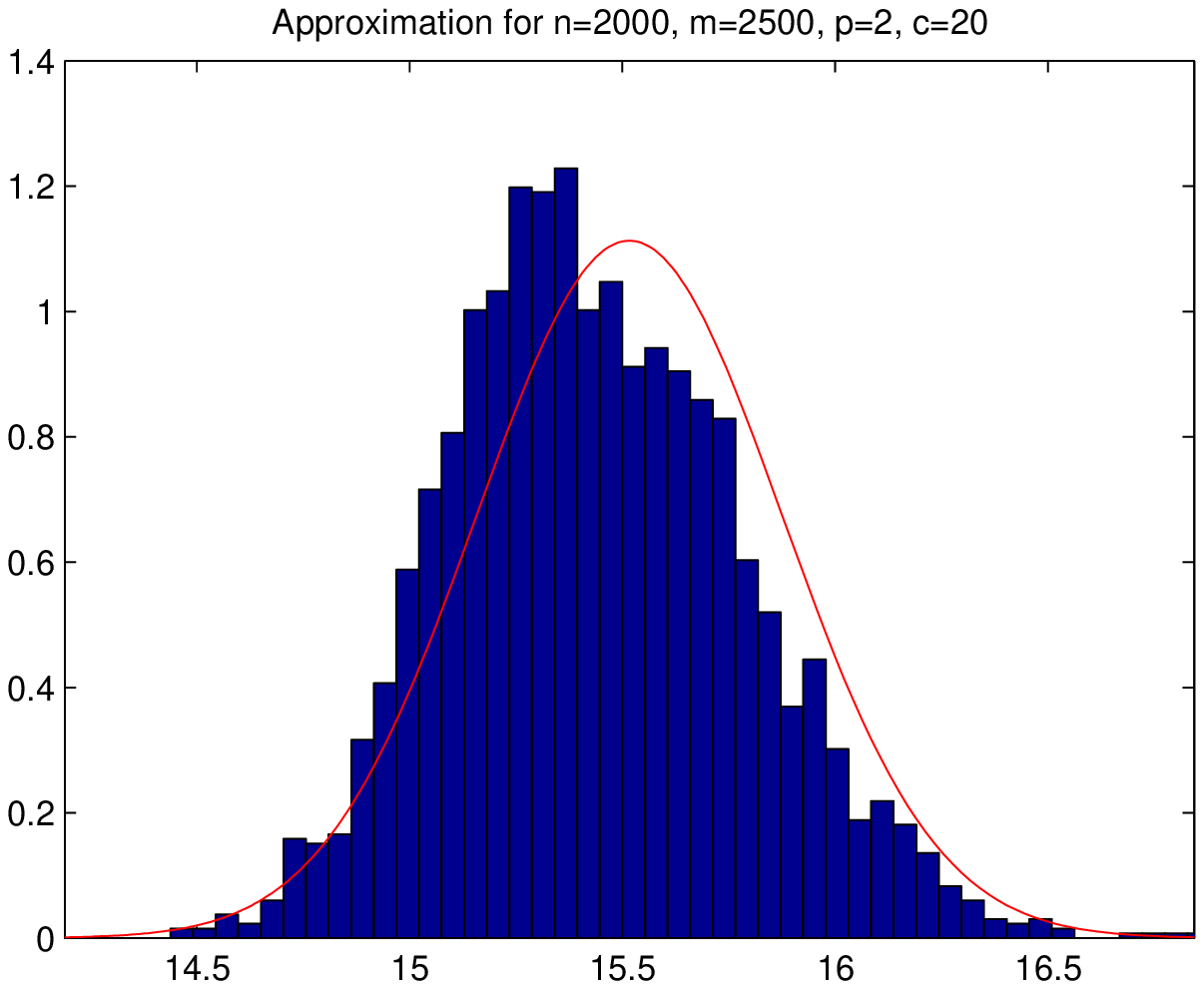}}}
      \subfigure[$p=2.5$]{\scalebox{0.40}{\includegraphics{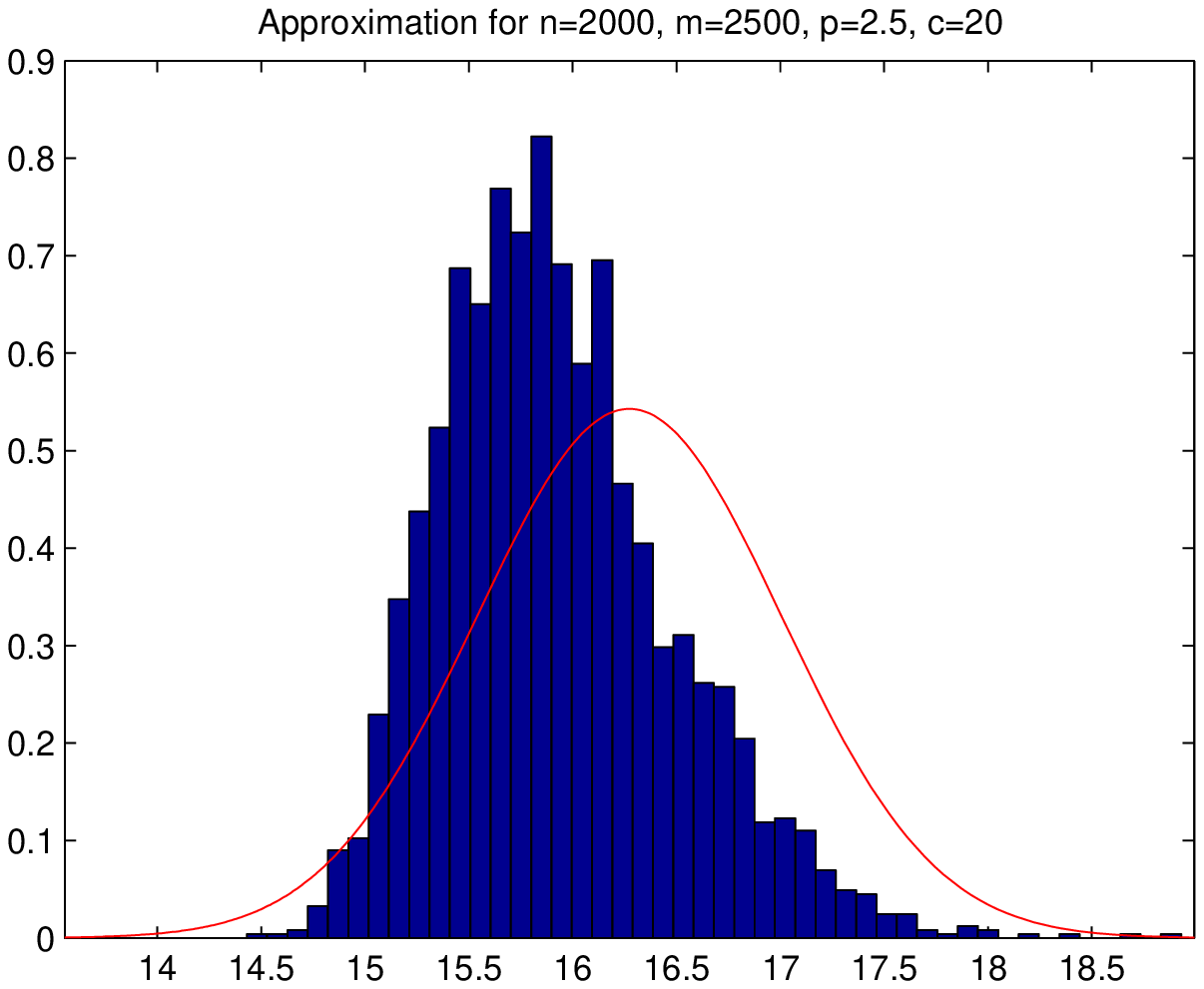}}}
      }
          \caption{\textit{Density histogram of the distribution of the estimator $\rho_n $ for different values of $p$ when $X\sim \mathcal{N}(10,3).$ }}
    \label{fig3}
  \end{center}
\end{figure}
Figure \ref{fig1} shows that the central limit theorem indeed represents a very good approximation which improves significantly with increasing sample size. The small downward
bias that appears in Figure 1 a) is getting increasingly irrelevant with growing sample size. We have experimented with different values of $p$ such as $p=1, 1.5, 2 $ and $2.5$ and we have also changed the value of $\epsilon$ (respectively $c=1/\epsilon$). The tendency shown in Figure \ref{fig1} is largely upheld, however, as expected, the  standard errors are increased when $c$ and/or $p$ is increased. Also, the limiting normal approximation seems to be  more accurate for the same sample sizes when a smaller value of $p$ is used. This discussed effect is illustrated on Figure \ref{fig3} where $p=1$ (i.e., the case of AVaR), $p=1.5,$  $p=2$ (where a different sample in comparison to the sample in Figure \ref{fig1},) and $p=2.5$ was simulated). The remaining quantities have been kept fixed to $n=2000$ and $c=20.$
We stress that increasing the sample size in Figure \ref{fig3} d) makes the histogram look much more like the limiting normal curve so that the discrepancy observed there is indeed just due to the limiting approximation popping up at larger samples when $p$ is increased.

We also experimented with different distributions for the random variable $X.$ We took specifically $t$-distributions with degrees of freedom $\nu$ such as 4, 6,  8 and 60, shifted to have the same mean of 10 like in the normal simulated data. The results of this comparison for $p=2,  \epsilon=0.05$ and $n=4000$ are shown in Figure 2. The variances of the $t$-distributed variables, being equal to $\nu/(\nu -2),$ are finite and even smaller than the variance of the normal random variable in Figure \ref{fig1}. However the heavier tails of the $t$ distribution adversely affect the quality of the approximation. Despite the fact that the limiting distribution of the risk estimator is still normal when $\nu=6$ and $\nu=8,$ the heavy tailed data cause the normal approximation to be relatively poor even at $n=4000.$ The case $\nu=60$ is closer to normal distribution and hence the approximation works better in this case.

Note that the limiting distribution when $p=2$ involves the fourth moment of the $t$ distribution and this moment is finite for $\nu=6, 8$ and $60$ but is infinite when $\nu =4.$ As a result, it can be seen from Figure \ref{fig2} d) that the normal approximation collapses in this case. Also, Figure \ref{fig2} shows that for attaining similar quality in Kolmogorov metric for the asymptotic approximation like in the case of normally distributed $X,$ in Figure 1 c),  much bigger samples are needed. For the fixed sample size of 4000, the quality of the normal approximation worsens as $\nu$ decreases from 60 to 8 and then to 6. Furthermore, and outside of the scope of the present paper, we note that if the distribution of $X$ has even heavier tails than the $t$ distribution with (for example, if it is in the class of stable distributions
 with stability parameter in the range (1,2)) then the limiting distribution of the risk may not be normal at all.
\newpage
\section{Conclusions}
The infinity dimensional delta method is a standard statistical technique to evaluate the asymptotic distribution of estimators of statistical functionals. The applicability of the procedure hinges on veryfing smoothness conditions of the related functionals. Motivated primarily by the need to estimate coherent risk measures we introduce a general composite structure for such functionals in in which all known coherent risk measures can be cast. The potential applicability of our central limit theorems however extends beyond functionals representing coherent risk measures. Our short simulation study indicates that the central limit theorem-type approximations are very accurate when the sample size is large, $p$ is in reasonable limits between 1 and 3 and the distribution of $X$ is with not too heavy tails. We note that for smaller sample sizes, the technique of concentration inequalities may be more powerful and accurate when evaluating the closeness of the approximation. It is possible to derive concentration inequalities for estimators of statistical functionals with the structure that has been introduced in our paper. This is a subject of ongoing research.

\section*{Acknowledgements}

{The first author was partially supported by the NSF grant DMS-1311978.
The second author was partially supported by a research grant PS27205 of The University of New South Wales.
The third author was partially supported by the NSF grant DMS-1312016.}

% AOS,AOAS: If there are supplements please fill:
%\begin{supplement}[id=suppA]
%  \sname{Supplement A}
%  \stitle{Title}
%  \slink[doi]{10.1214/00-AOASXXXXSUPP}
%  \sdatatype{.pdf}"
%  \sdescription{Some text}
%\end{supplement}

\end{document}